\newtheorem{thm}{Theorem}[section]
\newtheorem*{thmA}{\bf Theorem A}
\newtheorem*{thmB}{\bf Theorem B}
\newtheorem*{thmC}{\bf Theorem C}
\newtheorem{lem}[thm]{Lemma}
\newtheorem{cor}[thm]{Corollary}
\theoremstyle{definition}
\newtheorem{de}[thm]{Definition}
\theoremstyle{remark}
\numberwithin{equation}{section}
\def \N {\mathbb N}
\def \R {\mathbb R}
\def \N {\mathbb N}
\def \R {\mathbb R}
\numberwithin{equation}{section}
\begin{document}
\title[Time-restricted sensitivity and entropy]{Time-restricted sensitivity and entropy}

\author[K. Liu]{Kairan Liu}
\address{K. Liu: Department of Mathematics, University of Science and Technology of China,
Hefei, Anhui, 230026, P.R. China}
\email{lkr111@mail.ustc.edu.cn}
\author[L. Xu]{Leiye Xu}
\address{L. Xu: Department of Mathematics, University of Science and Technology of China,
Hefei, Anhui, 230026, P.R. China}
\email{leoasa@mail.ustc.edu.cn}
\author[R. Zhang]{Ruifeng Zhang}
\address{R. Zhang: School of Mathematics, Hefei University of Technology,
Hefei, Anhui, 230009, P.R. China}
\email{rfzhang@hfut.edu.cn}
\thanks{K.Liu was supported by NNSF of China (11731003), L.Xu was supported by
	NNSF of China (11801538), R. Zhang was supported by NNSF of China (11871188,11671094).}
\subjclass[2010]{37A35, 54H20}
\keywords{restricted sensitivity, asymptotic rate,entropy}

\date{}

\begin{abstract}
In this paper, we consider measure-theoretical restricted sensitivity and topological restricted sensitivities by restricting the first sensitive time. For a given topological dynamical system, we define measure-theoretical restricted asymptotic rate with respect to sensitivity, and obtain that it equal to the reciprocal of the Brin-Katok local entropy for almost every point. For topological version we have similar definitions and conclusions.
\end{abstract}
\maketitle

\section{Introduction}
Throughout this paper, by {\it a topological dynamical system} (TDS for short) we mean a pair $(X,T)$, where $X$ is a compact metric space with metric $\rho$ and $T:X\rightarrow X$ is a continuous map. The set of all the $T$-invariant Borel probability measures and ergodic measures on $X$ are denoted by $M(X,T)$ and $M^e(X,T)$.

Since the notions were introduced in \cite{R,AY,Gu}, sensitivity has been widely studied as a characterization of chaos for topological dynamical system, which says roughly that there are two points in each non-empty open subset whose trajectories are apart from a given positive constant at some moments. A measure-theoretic version of sensitivity was introduced in \cite{JKL}, and has been further studied by several researchers (\cite{GIL,HLY,HMS,G}).

The notion of sensitivity was generalized by measuring the set of nonnegative integers for which the sensitivity occurs \cite{HKKZ,HKZ,L,LY,TK}. Recently thick sensitivity, thickly syndetical sensitivity where demand those sets be thick or thickly syndetic in  \cite{LLW,TK}. Ye and Yu introduced block (resp. strongly) thickly (resp. IP) sensitivity and proved several Aulander-Yorke's type dichotomy theorems in \cite{YY}.

Notions of measurable sensitivity that restricting the first sensitive time for a standard Lebesgue space was developed in \cite{A.D.F.K.L.S}. The authors showed that these notions are related to positive metric entropy of finite measure-preserving system, and explored the notions in the context of (non-measure-preserving) nonsingular systems.

Our aim in this paper is to investigate measure-theoretical restricted sensitivity and topological restricted sensitivity for a TDS.

Let $(X,T)$ be a TDS. For any $x\in X$ and $\mu\in M(X,T)$, we introduce the notion of measure-theoretical restricted asymptotic rate of $\mu$ at $x$ (see section 3 for details). And we will prove that it has the following relation with Brin-Katok's local entropy at $x$ with respect to $\mu$ (see section 2 for definitions).

\begin{thmA}\label{thm-1}Let $(X,T)$ be a TDS and $\mu \in M(X,T)$. Then for $\mu$-a.e. $x\in X$, one has $$a_\mu(x)=\frac{1}{h_\mu(T,x)}$$
where $a_{\mu}(x)$ is $\mu$-restricted asymptotic rate and $h_\mu(T,x)$ is the Brin-Katok's local entropy at $x$ with respect to $\mu$ and we note: $\frac{1}{0}=+\infty$.
If in addition $\mu$ is ergodic, then $T$ is measure-theoretic restricted sensitive if and only if $h_{\mu}(X,T)>0$.
\end{thmA}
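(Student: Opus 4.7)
\emph{Proof plan.} The plan is to realize the restricted asymptotic rate $a_\mu(x)$ as a ratio governed by the first $\mu$-sensitive time and then to match it against the exponential decay rate of Bowen balls delivered by Brin--Katok's local entropy theorem. For fixed $\delta>0$ and $\epsilon>0$, let $N_\delta^\mu(x,\epsilon)$ denote the smallest $n$ such that
$$\mu\bigl(\{y\in B(x,\epsilon):\rho(T^n y,T^n x)\ge\delta\}\bigr)>0,$$
so $N_\delta^\mu(x,\epsilon)$ is the first time a positive $\mu$-mass of $B(x,\epsilon)$ has escaped distance $\delta$ from the orbit of $x$. I expect the definition of $a_\mu(x)$ in Section~3 to take the form of a double limit $\lim_{\delta\to 0}\lim_{\epsilon\to 0}N_\delta^\mu(x,\epsilon)/\log(1/\epsilon)$, with $\liminf$ and $\limsup$ versions matching on a $\mu$-full set. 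The target identity $a_\mu(x)=1/h_\mu(T,x)$ is then a reformulation of the heuristic that $\mu(B_n(x,\delta))\approx e^{-nh_\mu(T,x)}$ crosses below $\mu(B(x,\epsilon))$, and hence forces sensitivity, exactly at $n\approx \log(1/\epsilon)/h_\mu(T,x)$.

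I would prove the two matching inequalities separately. For $a_\mu(x)\le 1/h_\mu(T,x)$, note that whenever $\mu(B_n(x,\delta))<\mu(B(x,\epsilon))$ the Bowen ball $B_n(x,\delta)$ cannot swallow $B(x,\epsilon)$ up to null sets, forcing $N_\delta^\mu(x,\epsilon)\le n$. Combining the $\limsup$ half of Brin--Katok with a mild lower bound on $\mu(B(x,\epsilon))$ (any one-sided polynomial decay in $\epsilon$ suffices on a set of full measure), I obtain $N_\delta^\mu(x,\epsilon)\le (1+\eta)\log(1/\epsilon)/h_\mu(T,x)$ for all sufficiently small $\epsilon$, and the desired bound follows after $\eta\to 0$ and $\delta\to 0$. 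For the reverse inequality $a_\mu(x)\ge 1/h_\mu(T,x)$, I must show that if $n\le(1-\eta)\log(1/\epsilon)/h_\mu(T,x)$ then the set $B(x,\epsilon)\setminus B_n(x,\delta)$ is $\mu$-null, so no $\mu$-sensitivity has occurred by step $n$. The ergodic statement is then immediate: when $\mu\in M^e(X,T)$, $h_\mu(T,\cdot)$ is $\mu$-a.e.\ equal to the constant $h_\mu(X,T)$, so $a_\mu$ is $\mu$-a.e.\ equal to $1/h_\mu(X,T)$, and measure-theoretic restricted sensitivity — the finiteness of $a_\mu(x)$ on a positive-measure set — becomes equivalent to $h_\mu(X,T)>0$.

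The principal difficulty lies in the lower bound. Brin--Katok only controls the total $\mu$-measure of the Bowen ball at $x$, whereas what is needed is to rule out $\mu$-mass escaping $B_n(x,\delta)$ from \emph{within} the small ambient ball $B(x,\epsilon)$ — a strictly finer statement. I expect to resolve this through an exchange-of-basepoint argument: applying Brin--Katok simultaneously at $y$ (using $T$-invariance of $\mu$ and the Lusin-measurability of $h_\mu(T,\cdot)$) and combining it with a Borel--Cantelli-type input to ensure that for $\mu$-a.e.\ $x$ and $\mu$-a.e.\ $y$ near $x$ the two orbits stay within $\delta$ of each other for at least $(1-\eta)\log(1/\epsilon)/h_\mu(T,x)$ steps. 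Once this coupling is in hand, sending $\eta\to 0$ and then $\delta\to 0$ in the correct order closes the argument and yields both identities of Theorem~A.
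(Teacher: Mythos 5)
Your plan rests on a reading of $a_\mu(x)$ that does not match the paper's definition, and this mismatch is not cosmetic: it changes both the normalization and where the difficulty lies. In the paper, $a_\mu(x,\delta)$ is the infimal $a$ such that \emph{every} Borel set $V$ of positive $\mu$-measure exhibits $\delta$-sensitivity relative to $x$ by time $-a\log\mu(V)$; equivalently (Lemma~\ref{3.2}), $1/a_\mu(x,\delta)=\inf_{V\in\mathscr{B}_\mu^+}\frac{-\log\mu(V)}{s_\mu(x,V,\delta)}$. You instead posit $a_\mu(x)=\lim_{\delta\to0}\lim_{\epsilon\to0}N_\delta^\mu(x,\epsilon)/\log(1/\epsilon)$ with $V$ restricted to metric balls $B(x,\epsilon)$ and normalized by $\log(1/\epsilon)$ rather than by $-\log\mu(V)$. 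These two quantities do not coincide in general: if $\mu(B(x,\epsilon))$ decays like $\epsilon^d$ with $d\ne1$, the denominators differ by a factor of $d$. More importantly, the extremizing sets $V$ in the paper's infimum are Bowen balls $B_n(x,\delta)$, not metric balls — the key identity is $\frac{1}{a_\mu(x)}=\lim_{\delta\to0}\inf_{n\ge1}\frac{-\log\mu(B_n(x,\delta))}{n}$ (Lemma~\ref{lm1}).

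This misreading is what makes your ``lower bound'' seem like the principal difficulty. In the paper's formulation the direction $1/a_\mu(x)\le h_\mu(T,x)$ is essentially free: take $V=B_n(x,\delta)$, observe $s_\mu(x,B_n(x,\delta),\delta)>n$ since by construction every $y\in V$ stays $\delta$-close to the orbit of $x$ through time $n$, and plug into the infimum. There is no need for an exchange-of-basepoint or Borel--Cantelli argument, and in fact your proposed claim — that $B(x,\epsilon)\setminus B_n(x,\delta)$ is $\mu$-null for $n\lesssim\log(1/\epsilon)/h_\mu(T,x)$ — is not true in general and not what is required. The genuine content of the other direction ($1/a_\mu(x)\ge h_\mu(T,x)$) is the observation that a set $V$ exhibiting no $\delta$-sensitivity through time $M=\lfloor -a\log\mu(V)\rfloor$ must be contained, mod $\mu$, in $\overline{B_M(x,\delta)}\subseteq B_M(x,2\delta)$, which is how the paper converts an arbitrary ``bad'' set $V$ into a Bowen ball. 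You also omit what the paper isolates as a separate technical point (Lemma~\ref{333}): that the full infimum $\inf_{n\ge1}$ can be replaced by $\liminf_{n\to\infty}$ after $\delta\to0$ — needed to match against Brin--Katok — which requires a short argument ruling out atoms. As written, your plan would not close; I suggest reworking it around the characterization $1/a_\mu(x,\delta)=\inf_{V}\frac{-\log\mu(V)}{s_\mu(x,V,\delta)}$ and the Bowen-ball reduction above.
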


For a TDS $(X,T)$, we can also define topological version of restricted sensitivities. Actually, for any $x\in X$ we introduce type-I and type-II topological restricted asymptotic rates denoted by $a_{1}(x)$ and $a_2(x)$ respectively (see section 4 for definitions). Then we have the following results.

\begin{thmB}\label{thm-2}Let $(X,T)$ be a TDS, $x\in X$ and $a_1(x)$ be the type-I topological restricted asymptotic rate at $x$. Then one has
 $$a_1(x)\le\inf_{\mu\in M(X,T)}a_\mu(x).$$
Moreover, $\inf\limits_{x\in X}a_1(x)\le \frac{1}{h_{top}(T)}$, where $h_{top}(T)$ is the topological entropy of $(X,T)$.
\end{thmB}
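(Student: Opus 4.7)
The strategy is to prove the first inequality by directly comparing the definitions of the topological and measure-theoretic rates, and then to derive the second inequality by combining part (1) with Theorem A and the variational principle.

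For part (1), fix $\mu\in M(X,T)$ and $x\in X$. Unpacking the definitions from Sections 3 and 4, both $a_1(x)$ and $a_\mu(x)$ are constructed from a ``first $\delta$-sensitive time at $x$'' applied to a family of test sets around $x$: open neighbourhoods in the topological case, and sets of prescribed $\mu$-mass in the measure-theoretic case. A measure-theoretic witness of sensitivity at time $n$ is a set containing $x$ of small $\mu$-mass on which $T^n$ produces a $\delta$-separated pair; outer regularity of $\mu$ allows us to replace it by an open neighbourhood exhibiting the same separation, and this neighbourhood is a legal competitor in the topological definition of $a_1(x)$. Hence, at each comparable scale, the first topological sensitive time is dominated by the first measure-theoretic one, and taking the appropriate $\limsup$ preserves this inequality; since $\mu$ was arbitrary the claim $a_1(x)\le\inf_\mu a_\mu(x)$ follows.

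For part (2), let $\mu\in M^e(X,T)$ be ergodic. By the Brin-Katok entropy theorem the local entropy is $\mu$-almost everywhere constant and equal to $h_\mu(T)$, and Theorem A then gives $a_\mu(x)=1/h_\mu(T)$ on a set of full $\mu$-measure. Combined with part (1),
\[
\inf_{x\in X}a_1(x)\ \le\ a_1(x)\ \le\ a_\mu(x)\ =\ \frac{1}{h_\mu(T)}
\]
on that set, whence $\inf_{x\in X}a_1(x)\le 1/h_\mu(T)$. Taking the infimum over $\mu\in M^e(X,T)$ and applying the variational principle $h_{top}(T)=\sup_{\mu\in M^e(X,T)} h_\mu(T)$ produces the desired bound $\inf_{x\in X}a_1(x)\le 1/h_{top}(T)$. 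The case $h_{top}(T)=+\infty$ is handled analogously with a sequence of ergodic measures $\mu_n$ whose entropies diverge, yielding $\inf_x a_1(x)=0$ in accordance with the convention $1/\infty=0$.

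The principal obstacle lies in part (1). The two rates are parameterised by genuinely different ``size'' scales, diameter versus $\mu$-mass, and these are only loosely related: an open ball of small diameter may carry very different mass depending on $\mu$, while a set of small $\mu$-mass need not be small in diameter. The pointwise comparison of first sensitive times must therefore be carried out along carefully chosen sequences of scales, ensuring that the normalising quantities appearing in the definitions of $a_1$ and $a_\mu$ are aligned in the limit. Getting this matching of scales to work without losing the inequality is the main technical content of the proof.
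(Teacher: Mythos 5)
The second half of your argument (deriving $\inf_x a_1(x)\le 1/h_{top}(T)$ from part (1), Theorem~A, and the variational principle via ergodic measures) is correct and essentially matches the paper's route, modulo the paper integrating $h_\mu(T,x)$ rather than evaluating at an a.e.\ point.

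Part (1) is where the gap lies, and it stems from a misreading of Definition~\ref{type1}. You describe $a_1(x)$ as being built from ``open neighbourhoods'' of $x$ with size measured by diameter, and you then flag as the main obstacle a ``genuine mismatch of scales'' between diameter and $\mu$-mass. But the type-I test sets are not neighbourhoods of $x$ at all: $OC(X,T)$ consists of \emph{all} open sets $V$ with positive capacity, where $c(V)=\inf_{x}\limsup_n \frac1n\sum_{i<n}\chi_V(T^ix)$, and the normalising quantity is $-\log c(V)$, not a diameter. The paper's Lemma~\ref{capacity} shows $c(V)=\inf_{\mu\in M(X,T)}\mu(V)$, which is precisely the device that aligns the two scales; your proposal never invokes it, and the scale-matching problem you worry about is in fact already solved by that lemma. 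The outer-regularity step you propose is also pointed in the wrong direction: there is no need to convert a $\mu$-positive Borel set into an open one, because the topological test sets are already open and are automatically $\mu$-positive (since $\mu(V)\ge c(V)>0$). The actual argument is short and direct: for any $V\in OC(X,T)$ and $\mu\in M(X,T)$ one has $V\in\mathscr{B}_\mu^+$ and $\tilde s(x,V,\delta)\le s_\mu(x,V,\delta)$ (a positive-measure set of separated $y$'s contains a point), while $-\log\mu(V)\le-\log c(V)$, so via Lemmas~\ref{3.2} and~\ref{tp.re.eq} one obtains $\frac{1}{a_1(x,\delta)}\ge\frac{1}{a_\mu(x,\delta)}$, and letting $\delta\to0$ and taking the infimum over $\mu$ gives part (1). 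As written, your proposal would not close this step without first correcting the definition of $a_1$ and introducing the capacity comparison.
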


\begin{thmC}\label{thm-3} Let $(X,T)$ be a TDS and $a_2(x)$ be the type-II topological restricted asymptotic rate at $x\in X$. Then
 $$\sup_{x\in X}\frac{1}{a_2(x)}=h_{top}(T).$$
\end{thmC}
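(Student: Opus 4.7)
The plan is to establish the equality $\sup_{x\in X}1/a_2(x)=h_{top}(T)$ by proving the two inequalities separately. Both directions exploit the basic comparison between first sensitive times and Bowen dynamical balls: if a neighborhood $U\ni x$ has its first sensitive time at some $n$ (i.e.\ $T^nU$ first has diameter $\ge\delta$), then $U$ must lie inside a Bowen ball $B_n(x,\delta)$; conversely, small Bowen balls produce neighborhoods with long first sensitive times. Since the type-II rate is purely topological, the variational principle will be the natural bridge to the measure-theoretic setting of Theorem A.

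For the upper bound $\sup_{x}1/a_2(x)\le h_{top}(T)$, I would fix $x\in X$ and $\delta>0$. For each small $\epsilon>0$, any $\epsilon$-neighborhood of $x$ that is not yet $\delta$-separated under iteration up to time $n$ lies in a Bowen ball $B_n(x,\delta)$. Using the definition of topological entropy via the exponential growth of $(n,\delta)$-spanning sets, one can pigeonhole $\epsilon$-balls inside such Bowen balls and conclude that the first sensitive time is at least of order $(-\log\epsilon)/h_{top}(T)$. Taking the appropriate limit in $\epsilon$ and then $\delta$ in the definition of $a_2$ yields $a_2(x)\ge 1/h_{top}(T)$ for \emph{every} $x\in X$, which is the required upper bound.

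For the lower bound $\sup_{x}1/a_2(x)\ge h_{top}(T)$, I would invoke the variational principle together with Theorem A. Given $\eta>0$, pick $\mu\in M^e(X,T)$ with $h_\mu(T)>h_{top}(T)-\eta$. By Theorem A and ergodicity, $a_\mu(x)=1/h_\mu(T,x)=1/h_\mu(T)$ for $\mu$-a.e.\ $x$. The crucial step is a comparison lemma of the form $a_2(x)\le a_\mu(x)$ at $\mu$-typical points: since the Brin-Katok asymptotic $\mu(B_n(x,\delta))\approx e^{-nh_\mu(T)}$ forces topological neighborhoods of $x$ with sufficiently concentrated $\mu$-mass to remain small under iteration for long times, the topological type-II first sensitive time at $x$ is bounded by the measure-theoretic one. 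At a generic $x$ this gives $1/a_2(x)\ge 1/a_\mu(x)=h_\mu(T)>h_{top}(T)-\eta$, and letting $\eta\to 0$ completes the argument.

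The main obstacle is the comparison $a_2(x)\le a_\mu(x)$ at a single $\mu$-typical point. Unlike the type-I case in Theorem B, where one only needs an inequality relative to the infimum over measures, here we must locate \emph{one} point realizing the near-extremal topological value; this will require carefully matching the topological scales implicit in $a_2$ with the Brin-Katok generic sets of $\mu$, presumably via a Katok-type $(n,\delta)$-separated set of nearly full $\mu$-measure. A secondary technicality is the edge case $h_{top}(T)=+\infty$, which must be handled by taking measures of arbitrarily large entropy and ensuring that the convergences above are uniform enough to drive $1/a_2(x)$ to $+\infty$ at some point.
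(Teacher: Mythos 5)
Your two-step plan (upper bound by comparing first sensitive times of Bowen balls with spanning numbers; lower bound by transferring Theorem~A through the variational principle at a $\mu$-generic point) is the same skeleton the paper uses, and it does go through. Two of your sketches, however, are more complicated (or slightly off target) than the actual argument.

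For the upper bound, the quantity $(-\log\epsilon)/h_{top}(T)$ and the ``pigeonholing of $\epsilon$-balls'' are not the right heuristics; the definition of $a_2$ already builds in Bowen balls $B_N(x,\epsilon)$, and the only observation needed is this: if $\epsilon<\delta(x)$ and $y\in B_N(x,\epsilon)$, then $\rho(T^nx,T^ny)<\epsilon<\delta(x)$ for all $0\le n\le N$, so the $n$ guaranteed by TRS--II must satisfy $n>N$. Combined with $n\le a\log r(N,\epsilon)$, this gives $N<a\log r(N,\epsilon)$, i.e.\ $\tfrac{1}{a}<\tfrac{\log r(N,\epsilon)}{N}$, and letting $N\to\infty$, $\epsilon\to 0$, $a\to a_2(x,\delta(x))$, $\delta(x)\to 0$ yields $\tfrac{1}{a_2(x)}\le h_{top}(T)$. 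There is no pigeonhole step.

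For the lower bound, your worry about a Katok-type $(n,\delta)$-separated set is unfounded. The paper's comparison $a_2(x_0)\le a_\mu(x_0)$ at a Brin--Katok/generic point $x_0$ follows from the crude inequality $h_\mu(T,x_0)=h_\mu(T)\le h_{top}(T)$: for small $\epsilon$ and large $N$ one has both $-\log\mu(B_N(x_0,\epsilon))\le \tfrac{h_\mu(T)}{\beta}N\le\tfrac{h_{top}(T)}{\beta}N$ and $\beta h_{top}(T)N\le\log r(N,\epsilon)$, so $-\log\mu(B_N(x_0,\epsilon))\le \tfrac{1}{\beta^2}\log r(N,\epsilon)$. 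Then, for $a>a_\mu(x_0,\delta)$ the $\mu$-restricted sensitivity produces $n\le -a\log\mu(B_N(x_0,\epsilon))\le \tfrac{a}{\beta^2}\log r(N,\epsilon)$ and a point of $B_N(x_0,\epsilon)$ that $\delta$-separates, which is exactly TRS--II at $x_0$ with rate $\tfrac{a}{\beta^2}$; sending $\beta\to1$ and $a\to a_\mu(x_0)$ gives $a_2(x_0)\le a_\mu(x_0)=1/h_\mu(T)$, and taking the supremum over ergodic $\mu$ finishes. Your $\eta$-bookkeeping is an equivalent way of taking the supremum. Your caution about $h_{top}(T)=+\infty$ is fair — the paper does not flag it — but the same chain of inequalities handles it by letting $h_\mu(T)\to\infty$ along ergodic measures, since $a_\mu(x_0)\to 0$ and hence $\sup_x 1/a_2(x)=+\infty$.
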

The paper is organized as follows. In Section 2, we will list some notions and properties needed in our proof. In Section 3, we study the measure-theoretic restricted sensitivity and
prove Theorem~A. Finally, topological version of restricted sensitivities will be introduced in Section 4, where Theorem~B and Theorem~C will be proved.

\section{Preliminaries}
In this section, we will introduce measure-theoretic entropy and topological entropy for a TDS as well as local entropy. Ergodic decomposition and generic points will be also recalled.
\subsection{Entropies for a TDS}
The notion of entropy for a measure-preserving system was introduced by Kolmogorov in \cite{ANK} to describes the complexity.
In \cite{A.K.M}, Alder, Konheim and McAndrew defined the topological entropy for a TDS using open covers as an invariant of topological conjugacy and also as an analogue of measure theoretic entropy. There are equivalent definitions of topological entropy using separating and spanning sets, this was done by Dinaburg and by Bowen.

Let $(X,T)$ be a TDS and $K$ be a compact subset of $X$. For any $\epsilon >0$ and $n\in\N$, we call a set $R\subset X$ an {\it $(n,\epsilon)$-spanning set for K}, if for every $x\in K$ there exists $y\in R$, such that $\rho_n(x,y)<\epsilon$, where the compatible metric  $\rho_n$ is defined by the formula
$$\rho_n(x,y)=\max_{0\leqslant i \leqslant n}\rho(T^ix,T^iy).$$
Let $r(n,\epsilon,K)\in \mathbb{N}$ denote the smallest cardinality of all $(n,\epsilon)$-spanning sets for $K$. We call a finite set $S \subset K$ an {\it $(n,\epsilon)$-separated set for K}, if for every distant $x,y \in S$ we have $d_n(x,y)\geq \epsilon$. We denote by $s(n,\epsilon,K)\in \mathbb{N}$ the largest cardinality of all $(n,\epsilon)$-separated set for $K$. When $K=X$ we omit the restriction of $K$. Then the topological entropy of $T$ on $X$ is given by:
\begin{equation}\nonumber
\begin{aligned}h_{top}(T)&=\lim_{\epsilon\to0}\liminf_{n\to\infty}\frac{1}{n}\log(r(n,\epsilon))=
\lim_{\epsilon\to0}\limsup_{n\to\infty}\frac{1}{n}\log(r(n,\epsilon))\\
&=\lim_{\epsilon\to0}\liminf_{n\to\infty}\frac{1}{n}\log(s(n,\epsilon)) = \lim_{\epsilon\to0}\limsup_{n\to\infty}\frac{1}{n}\log(s(n,\epsilon))\\
\end{aligned}
\end{equation}

For a TDS $(x,T)$, Goodwyn, Dinaburg and Goodman proved the basic relationship between topological entropy and measure-theoretic entropy, the variational principle (\cite{LWG,TG,D}),
$$h_{top}(T)=\sup\limits_{\mu\in M(X,T)}h_\mu(T).$$

\subsection{Local entropy}
For a TDS $(X,T)$ and $\mu\in M(X,T)$, Brin and Katok proved that for $\mu$-a.e. $x\in X$, the following equation holds ( see\cite{Br.Ka}):
\begin{align*}
\lim_{\epsilon \to0}\liminf_{n\to \infty}\frac{-\log \mu(B_n(x,\epsilon))}{n}
=\lim_{\epsilon \to0}\limsup_{n\to \infty}\frac{-\log \mu(B_n(x,\epsilon))}{n},
\end{align*}
where $B_n(x,\epsilon)=\{y \in X: \rho_n(x,y)<\epsilon\}$.
And the common value are defined as $h_{\mu}(T,x)$, the {\it local entropy at $x$ with respect to $\mu$}. They also show that following relation between $h_{\mu}(T,x)$ and $h_{\mu}(T)$
$$h_{\mu}(T)=\int h_{\mu}(T,x)d\mu(x).$$
Especially, in the case $\mu$ is ergodic, $h_{\mu}(T,x)=h_\mu(T)$ for $\mu$ $x\in X$.

\subsection{Ergodic decomposition and generic points}
For a TDS $(X,T)$, an important consequence of the fact that $M(X,T)$ is a compact convex set is that it gives a way to decompose any invariant measure into ergodic components (see \cite[chapter 4]{Thomas} for details).
\begin{thm}\label{ergodic decomposition}
Let $(X,T)$ be a TDS. Then for any $\mu\in M(X,T)$ there is a unique probability measure $\lambda$ defined on $M(X,T)$ with the properties that
\begin{enumerate}
\item $\lambda(M^e(X,T))=1$.
\item $\int_X f d\mu=\int_{M^e(X,T)}(\int_X fd\nu) d\lambda(\nu)$.
\end{enumerate}
\end{thm}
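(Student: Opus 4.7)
The plan is to construct $\lambda$ via Birkhoff's ergodic theorem applied to a countable dense family of continuous observables, following the classical disintegration-over-the-invariant-$\sigma$-algebra argument. Fix a countable $\mathbb{Q}$-linear subalgebra $\F=\{f_i\}_{i\ge 1}\subset C(X)$ that is dense in $C(X)$. Birkhoff's theorem, applied to each $f_i$, yields a $T$-invariant set $X_0\subset X$ with $\mu(X_0)=1$ on which every time average $f_i^*(x):=\lim_{n\to\infty}\tfrac{1}{n}\sum_{k=0}^{n-1}f_i(T^kx)$ exists. For $x\in X_0$ the map $f_i\mapsto f_i^*(x)$ is $\mathbb{Q}$-linear, positive, and bounded by $\|f_i\|_\infty$, so it extends uniquely to a positive linear functional on $C(X)$ and hence, by the Riesz representation theorem, to a Borel probability measure $\mu_x$ on $X$ that depends measurably on $x$.

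Next I would verify that, after possibly shrinking $X_0$ to a smaller full-measure set, each $\mu_x$ is ergodic. Invariance of $\mu_x$ is formal, since $(f\circ T)^*=f^*$. For ergodicity one applies Birkhoff once more \emph{inside} $\mu_x$: the orbital average of any $f_i$ with respect to $\mu_x$ equals $f_i^*$, and the tower identity $\int f_i^*(y)\,d\mu_x(y)=f_i^*(x)$ (valid $\mu$-a.e.\ $x$, since $x\mapsto \mu_x$ realizes the disintegration of $\mu$ over the invariant $\sigma$-algebra $\I$) forces $f_i^*$ to be $\mu_x$-a.s.\ constant. Because $\F$ is dense in $C(X)$, the Birkhoff averages of every $f\in C(X)$ are $\mu_x$-a.s.\ constant, so $\mu_x$ is ergodic.

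Define $\lambda$ to be the push-forward of $\mu|_{X_0}$ under the measurable map $x\mapsto \mu_x$; then $\lambda$ is supported on $M^e(X,T)$, giving property (1). Property (2) is then the chain
\[
\int_X f\,d\mu \;=\; \int_{X_0} f^*(x)\,d\mu(x) \;=\; \int_{X_0}\Big(\int_X f\,d\mu_x\Big)d\mu(x) \;=\; \int_{M^e(X,T)}\Big(\int_X f\,d\nu\Big)d\lambda(\nu),
\]
first for $f\in C(X)$ and then, by a monotone-class argument, for every bounded Borel $f$. Uniqueness follows because any $\lambda'$ obeying (1) and (2) must induce the same disintegration of $\mu$ over $\I$: for $\lambda'$-a.e.\ ergodic $\nu$ the Birkhoff average of each $f_i$ equals $\nu(f_i)$ for $\nu$-a.e.\ $x$, which identifies $\nu$ with the measure $\mu_x$ produced above, and Rokhlin's uniqueness of regular conditional probabilities then yields $\lambda'=\lambda$.

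The main obstacle I expect is establishing ergodicity of $\mu_x$ for $\mu$-a.e.\ $x$. Invariance is immediate, but ergodicity requires running the ergodic theorem a second time with respect to $\mu_x$ and carefully matching a countable collection of null sets against the original $\mu$-null exceptional set from the first application; without the separability of $C(X)$ this step would fail. The remaining pieces, namely the Riesz-representation construction of $\mu_x$, the push-forward defining $\lambda$, and the uniqueness via disintegration, are routine measure-theoretic bookkeeping.
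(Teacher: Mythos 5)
The paper does not actually prove Theorem~\ref{ergodic decomposition}; it is quoted as a standard background fact with a pointer to \cite[Ch.~4]{Thomas}, so there is no in-text proof to compare against. Your sketch is the classical Birkhoff-plus-Riesz construction of the ergodic decomposition, which is essentially the argument in that reference, and you correctly isolate the genuinely delicate points (measurability of $x\mapsto\mu_x$, ergodicity of the fibre measures, uniqueness). One step is a bit too compressed: the tower identity $\int_X f_i^*(y)\,d\mu_x(y)=f_i^*(x)$ only determines the $\mu_x$-\emph{mean} of $f_i^*$, not its $\mu_x$-a.s.\ constancy, so by itself it does not ``force $f_i^*$ to be $\mu_x$-a.s.\ constant.'' To close this you need either the $L^2$ variance computation $\int_X\bigl(\int_X|f_i^*(y)-f_i^*(x)|^2\,d\mu_x(y)\bigr)d\mu(x)=0$ (using the tower property twice, after first extending it from $C(X)$ to bounded Borel functions by a monotone class argument), or the general fact that a regular conditional probability given the countably generated $\sigma$-algebra $\sigma(\{f_i^*\})$---which coincides with $\I$ modulo $\mu$-null sets---is carried by a single atom, so that every $\I$-measurable function is $\mu_x$-a.s.\ constant for $\mu$-a.e.\ $x$. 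The uniqueness paragraph is likewise terser than a complete argument (one typically invokes either the Choquet-simplex structure of $M(X,T)$ or mutual singularity of distinct ergodic measures), but the route you describe is the right one.
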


Let $(X,T)$ be a TDS, $\mu\in M(X,T)$. We say $x_0\in X$ is a {\it generic point} for $\mu$ if
$$\frac{1}{n}\sum_{i=1}^{n-1}f(T^ix_0)\to\int fd\mu,$$
for every continuous function $f\in C(X)$. By Birkhoff ergodic theorem, we know that if $\mu\in M^e(X,T)$ then $\mu$-almost every $x\in X$ is a generic point for $\mu$.

\section{measure-theoretical restricted sensitivity }
In this section, we introduce the notions of measure-theoretic restricted sensitivity and measure-theoretic restricted asymptotic rate, then study their relations with measure-theoretic entropy. For a TDS $(X,T)$ and $\mu\in M(X,T)$, we set $\mathscr{B}_{\mu}^+=\{V\in\mathscr{B}_X,\ \mu(V)>0\}$, where $\mathscr{B}_X$ is the $\sigma$-algebra of Borel subsets of $X$.

\begin{de}\label{def-m-sens}Let $(X,T)$ be a TDS and $\mu\in M(X,T)$. Then for any $x\in X$ and $\delta>0$,  we say $T$ is {\it $\mu$-restricted sensitive at $x$ w.r.t. $\delta$} if there exists $a>0$, such that for every $V\in\mathscr{B}_{\mu}^+$ there exists nonnegative integer $n \le -a\log\mu(V)$ such that
$$\mu(\{\ y\in V: \rho(T^nx,T^ny)>\delta \}\ )>0.$$
$T$ is called {\it $\mu$-restricted sensitive} if for $\mu$-a.e. $x\in X$, there exists $\delta>0$ such that $T$ is measure-theoretic restricted sensitive at $x$ w.r.t. $\delta$.
\end{de}
We call $a$ in the definition \ref{def-m-sens} is a {\it $\mu$-asymptotic rate at $x$ w.r.t $\delta$}.
We further consider {\it $a_\mu(x,\delta)$, $\mu$-restricted asymptotic rate at $x$ with respect to $\delta$}, which is defined to be the infimum over all $\mu$-asymptotic rate at $x$ w.r.t $\delta$. If $T$ is not $\mu$-restricted sensitive at $x$ w.r.t. $\delta$, then we let $a_\mu(x,\delta)=+\infty$.

From another point of view, we have another characterization of measure-theoretic restricted asymptotic rate. Let $(X,T)$ be a TDS and $\mu\in M(X,T)$. Then for any $x\in X$, $\delta>0$ and $V\in\mathscr{B}^+_{\mu}$, we set $s_{\mu}(x,V,\delta)$ be the minimal nonnegative integer $n$ (if such number exists) such that
$$\mu(\{\ y\in V: \rho(T^nx,T^ny)>\delta \}\ )>0.$$
We let $s_{\mu}(x,V,\delta)=+\infty$ if such number does not exist. Then we have the following lemma.
\begin{lem}\label{3.2}
Let $(X,T)$ be a TDS and $\mu\in M(X,T)$. Then for any $x\in X$ and $\delta>0$ we have
$$\frac{1}{a_{\mu}(x,\delta)}=\inf_{V\in\mathscr{B}_{\mu}^+}\frac{-\log\mu(V)}{s_{\mu}(x,V,\delta)}$$
where for any constant $c\ge 0$ we note $\frac{c}{0}=+\infty$.
\end{lem}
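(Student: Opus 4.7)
The plan is to unwind the definition of $a_\mu(x,\delta)$ and observe that the quantifier in the definition becomes, for each fixed $a$, simply an inequality between $s_\mu(x,V,\delta)$ and $-a\log\mu(V)$; taking reciprocals then converts it into the ratio on the right-hand side, and passing to the infimum in $V$ and then in $a$ yields the desired identity. The main work is purely formal: carefully tracking the conventions $c/0 = +\infty$ and the boundary cases $\mu(V)=1$, $s_\mu(x,V,\delta) \in \{0,+\infty\}$, $a_\mu(x,\delta) \in \{0,+\infty\}$.

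First, I would rewrite the definition. By the very definition of $s_\mu(x,V,\delta)$, the assertion ``there exists $n \le -a\log\mu(V)$ with $\mu(\{y\in V:\rho(T^n x,T^n y) > \delta\}) > 0$'' is exactly the inequality $s_\mu(x,V,\delta) \le -a\log\mu(V)$ (with the understanding that if no such $n$ exists then $s_\mu(x,V,\delta)=+\infty$, in which case the inequality forces $\mu(V)=0$, contradicting $V\in\mathscr{B}_\mu^+$). Consequently
\[
a_\mu(x,\delta) = \inf\bigl\{a>0 : s_\mu(x,V,\delta) \le -a\log\mu(V)\ \text{for every } V\in\mathscr{B}_\mu^+\bigr\}.
\]
Write $b^* := \inf_{V\in\mathscr{B}_\mu^+}\frac{-\log\mu(V)}{s_\mu(x,V,\delta)}$ and $a^* := a_\mu(x,\delta)$.

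Next, for any $V\in\mathscr{B}_\mu^+$ with $\mu(V)<1$ and $s_\mu(x,V,\delta)<+\infty$, dividing by the positive quantities $s_\mu(x,V,\delta)$ and $a$ shows that $s_\mu(x,V,\delta) \le -a\log\mu(V)$ is equivalent to $\frac{-\log\mu(V)}{s_\mu(x,V,\delta)} \ge \frac{1}{a}$. The cases $\mu(V)=1$ (so $-\log\mu(V)=0$) or $s_\mu(x,V,\delta)=0$ are consistent with the conventions: the ratio $\frac{-\log\mu(V)}{s_\mu(x,V,\delta)}$ is $+\infty$ whenever $s_\mu(x,V,\delta)=0$, and is $0$ when $-\log\mu(V)=0$ but $s_\mu(x,V,\delta)>0$, matching exactly the truth-value of $s_\mu(x,V,\delta) \le -a\log\mu(V)$. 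Therefore, for every $a>0$,
\[
\bigl(\forall V\in\mathscr{B}_\mu^+,\ s_\mu(x,V,\delta) \le -a\log\mu(V)\bigr) \iff b^* \ge \tfrac{1}{a}.
\]

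Finally I take infimum over $a$. If $b^*>0$, this gives $a^* = \inf\{a>0:a\ge 1/b^*\}=1/b^*$. If $b^*=0$, the set of admissible $a$ is empty and $a^*=+\infty=1/b^*$. If $b^*=+\infty$, every $a>0$ is admissible and $a^* = 0 = 1/b^*$. In each case $1/a^* = b^*$, which is the claim. The only genuine obstacle is the bookkeeping around the conventions and the empty/degenerate infima; once these are dispatched, the identity is a one-line consequence of the reciprocal reformulation.
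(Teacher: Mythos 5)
Your proof is correct and rests on the same core identity the paper uses: unwinding the definition of $a_\mu(x,\delta)$ into the condition $s_\mu(x,V,\delta)\le -a\log\mu(V)$ for every $V\in\mathscr{B}_\mu^+$, taking reciprocals, and passing to the infimum. The only difference is presentational: you package the condition on $a$ as a single equivalence $b^*\ge 1/a$ and then compute one infimum, whereas the paper arrives at the same conclusion by a two-sided case analysis with one-sided limits $a\to a_\mu(x,\delta)^\pm$ and a separate treatment of $a_\mu(x,\delta)=+\infty$.
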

\begin{proof}For any fixed $x\in X$ and $\delta>0$, there are two cases. In the case that $T$ is not $\mu$-restricted sensitive at $x$ w.r.t. $\delta$, i.e. $a_{\mu}(x,\delta)=+\infty$, one has that for every $a\in (0,+\infty)$ there exists a subset $V_a\in\mathscr{B}_{\mu}^+$, such that for any $n\leq-a\log{\mu(V_a)}$ one has
$$\mu(\{y\in V_a: \rho(T^nx,T^ny)>0\})=0.$$
This means $s_{\mu}(x,V_a,\delta)>-a\log{\mu(V_a)}$. Thus $\frac{1}{a}>\inf\limits_{V\in\mathscr{B}_{\mu}^+}-\frac{\log{\mu(V)}}{s_{\mu}(x,V,\delta)}$, let $a\to+\infty$ we obtain the conclusion for the case $a_{\mu}(x,\delta)=+\infty$.

Now we assume $T$ is $\mu$-restricted sensitive at $x$ w.r.t. $\delta$, this is $a_{\mu}(x,\delta)<+\infty$. Then for any $a>a_{\mu}(x,\delta)$ and $V\in\mathscr{B}_{\mu}^+$ there exists a nonnegative integer $n \le -a\log\mu(V)$ such that
$$\mu(\{\ y\in V: \rho(T^nx,T^ny)>\delta \}\ )>0.$$
Hence $s_{\mu}(x,V,\delta) \le  -a\log\mu(V)$, that is, $\frac{1}{a}\le\frac{-\log\mu(V)}{s_{\mu}(x,V,\delta)}$ for any $V\in\mathscr{B}_{\mu}^+$. Then let $a \to a_{\mu}(x,\delta)$, we have
\begin{equation}\label{eq-a-mu-s}
  \frac{1}{a_{\mu}(x,\delta)}\leq \inf_{V\in\mathscr{B}_{\mu}^+}\frac{-\log\mu(V)}{s_{\mu}(x,V,\delta)}.
\end{equation}

On the other hand, for any $a<a_{\mu}(x,\delta)$ there exists $V_a\in\mathscr{B}_{\mu}^+$ such that for any $n\leq -a\log{\mu(V_a)}$ we have $\mu(\{y\in V_a: \rho(T^{n}x,T^ny)>\delta\})=0$. This implies $s_{\mu}(x,V_a,\delta)>-a\log{\mu(V_a)}$, thus $\frac{1}{a}>\inf\limits_{V\in\mathscr{B}_{\mu}^+}\frac{-\log{\mu(V)}}{s_{\mu}(x,V,\delta)}$. Let $a\to a_{\mu}(x,\delta)$ we have
\begin{equation}\label{eq-a-mu}
  \frac{1}{a_{\mu}(x,\delta)}\geq \inf_{V\in\mathscr{B}_{\mu}^+}\frac{-\log\mu(V)}{s_{\mu}(x,V,\delta)}.
\end{equation}

Combining \eqref{eq-a-mu-s} with \eqref{eq-a-mu}, we obtain the conclusion for the case $a_{\mu}(x,\delta)<+\infty$. And this ends our proof.
\end{proof}

Notice that $a_{\mu}(x,\delta)$ no increases as $\delta$ decreases. We can define
 {\it $\mu$-restricted asymptotic rate at $x$} by
$$a_{\mu}(x)=\lim_{\delta\rightarrow 0}a_{\mu}(x,\delta),$$
and we let $a_{\mu}(x)=+\infty$ if $a_{\mu}(x,\delta)=+\infty$ for any $\delta>0$. We have the following observations.
\begin{lem}\label{lem-1} Let $(X,T)$ be a TDS and $\mu\in M(X,T)$. Then for every $x\in X$ one has
\begin{enumerate}
  \item $x \notin supp(\mu)$ implies $a_{\mu}(x)=0$;
  \item $\mu(\{x\})>0$ implies $a_{\mu}(x)=+\infty$.
\end{enumerate}
\end{lem}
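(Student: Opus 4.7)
The plan is to prove both parts directly from Definition~3.1 by producing, in each case, a single Borel set $V \in \mathscr{B}_\mu^+$ whose behavior forces the claimed value of $a_\mu(x)$.

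For (1), since $x \notin \mathrm{supp}(\mu)$ there exists $\delta_0 > 0$ with $\mu(B(x,\delta_0)) = 0$. I would fix any $\delta \in (0,\delta_0)$, pick an arbitrary $V \in \mathscr{B}_\mu^+$, and show that the index $n = 0$ already witnesses sensitivity: because $\mu(V \cap B(x,\delta_0)) = 0$, we have $\rho(x,y) \ge \delta_0 > \delta$ for $\mu$-almost every $y \in V$, so
$$\mu\bigl(\{y\in V:\rho(T^0x,T^0y)>\delta\}\bigr)=\mu(V)>0.$$
Since $n = 0 \le -a\log\mu(V)$ holds for every $a>0$ (as $\mu(V) \le 1$), every positive $a$ qualifies as a $\mu$-asymptotic rate at $x$ w.r.t.\ $\delta$, forcing $a_\mu(x,\delta) = 0$. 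Letting $\delta \to 0$ then yields $a_\mu(x) = 0$.

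For (2), I would test the definition against $V = \{x\}$, which is Borel and lies in $\mathscr{B}_\mu^+$ by hypothesis. For every $\delta > 0$ and every $n \ge 0$ the set $\{y \in V:\rho(T^nx,T^ny)>\delta\}$ is empty, since the only candidate $y = x$ makes the distance vanish. Hence no nonnegative integer $n$ can satisfy the requirement of Definition~3.1 for this particular $V$, so $T$ fails to be $\mu$-restricted sensitive at $x$ with respect to any $\delta > 0$. By the stated convention this forces $a_\mu(x,\delta) = +\infty$ for every $\delta > 0$, and consequently $a_\mu(x) = +\infty$.

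No serious obstacle is expected, since both assertions are essentially syntactic consequences of Definition~3.1. The only subtlety to flag is the admissibility of the index $n = 0$ in (1), which follows at once from $-a\log\mu(V) \ge 0$ whenever $a>0$ and $\mu(V)\le 1$. An alternative route through Lemma~3.2 would work in principle but is less clean for part (1), because one still has to handle the degenerate situation $\mu(V) = 1$, where $-\log\mu(V) = 0$ and the quotient appearing in that lemma becomes indeterminate.
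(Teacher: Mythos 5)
Your proof is correct and essentially identical to the paper's. For (1), the paper chooses $\delta_0$ with $\mu(B(x,2\delta_0))=0$ and tests the sensitivity constant $\delta_0$ by removing the double-sized ball, while you choose $\delta_0$ with $\mu(B(x,\delta_0))=0$ and test constants $\delta<\delta_0$; this is a cosmetic difference, and in both arguments the index $n=0$ (admissible since $-a\log\mu(V)\ge 0$) witnesses sensitivity, giving $a_\mu(x,\delta)=0$. For (2), both use $V=\{x\}$ to show that no $n$ can ever work, so $a_\mu(x,\delta)=+\infty$ for every $\delta$.
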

\begin{proof}
$(1)$. For any $x \notin supp(\mu)$, we can find $\delta_0>0$ such that $\mu(B(x,2\delta_0))=0$. Notice that $a_{\mu}(x,\delta)$ is non negative and no increases as $\delta$ decreases. To show $a_{\mu}(x)=0$, it is enough to show that $a_{\mu}(x,\delta_0)=0$.

For any subset $V\in\mathscr{B}_\mu^+$, we set $\widetilde{V}=V \setminus B(x,2\delta_0)$. Then $\mu(V)=\mu(\widetilde V)>0$ and $\rho(x,y)>\delta_0$ for every $y\in\widetilde V$. That is, for any $a>0$ there is $0\le a\log(\mu(\widetilde{V}))$, such that
$$\mu(\{y\in V: \rho(x,y)>\delta_0\})=\mu(\widetilde V)>0.$$
This implies $a_{\mu}(x,\delta_0)=0$, and this ends the proof of $(1)$.

$(2)$. Assume $\mu(\{x\})>0$. We set $V=\{x\}$, then $\mu(V)>0$. For any $\delta>0$ and $n \in \mathbb{Z}_+$, one has
$$\mu(\{y\in V: \rho(T^nx,T^ny)>\delta\})=\mu(\emptyset)=0.$$
This is $a_\mu(x,\delta)=+\infty$ by definition. By taking $\delta\to 0$ we have $a_{\mu}(x)=+\infty$, and this finishes the proof of $(2)$.
\end{proof}

To prove Theorem A, we need the following lemmas.
\begin{lem}\label{333}Let $(X,T)$ be a TDS and $\mu\in M(X,T)$. Then for every $x\in X$ we have
\begin{equation}\label{eq-3}
\lim_{\delta\to0}\inf_{n\ge 1}\frac{-\log\mu(B_n(x,\delta))}{n}
=\lim_{\delta\to0}\liminf_{n\to+\infty}\frac{-\log\mu(B_n(x,\delta))}{n}
\end{equation}
\end{lem}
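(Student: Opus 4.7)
The plan is to set
$$A(\delta) := \inf_{n\ge 1}\frac{-\log\mu(B_n(x,\delta))}{n}, \qquad B(\delta) := \liminf_{n\to+\infty}\frac{-\log\mu(B_n(x,\delta))}{n},$$
so the claim becomes $\lim_{\delta\to 0}A(\delta)=\lim_{\delta\to 0}B(\delta)$. First I would observe the monotonicity structure: since $B_n(x,\delta)\subseteq B_n(x,\delta')$ for $\delta\le\delta'$, both $A$ and $B$ are non-increasing in $\delta$, so the two limits exist in $[0,+\infty]$. The easy direction $\lim A\le \lim B$ is immediate from the trivial inequality $A(\delta)\le B(\delta)$, which holds because the infimum is over all $n\ge 1$ while the liminf only sees the tail.

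The substantive direction is $\lim A\ge \lim B$. I would first dispose of the atomic case: if $\mu(\{x\})>0$, then $\mu(B_n(x,\delta))\ge\mu(\{x\})$ for every $n$ and $\delta$, so $-\log\mu(B_n(x,\delta))/n\to 0$, forcing $A(\delta)=B(\delta)=0$ for all $\delta>0$ and the identity is trivial. Assume then $\mu(\{x\})=0$. The key auxiliary fact I would use is that for every fixed $n\ge 1$,
$$\lim_{\delta\to 0}\mu(B_n(x,\delta))=\mu\Bigl(\bigcap_{\delta>0}B_n(x,\delta)\Bigr)=\mu(\{x\})=0,$$
by continuity of $\mu$ from above together with the fact that the nested intersection reduces to $\{x\}$ (already from the $i=0$ coordinate). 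Consequently, for every fixed $n$, $-\log\mu(B_n(x,\delta))/n\to +\infty$ as $\delta\to 0$.

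Given this, fix any real $t<\lim_{\delta\to 0}B(\delta)$ and pick $\delta_1>0$ small enough that $B(\delta_1)>t$. By the definition of liminf there is $N=N(t,\delta_1)$ such that $-\log\mu(B_n(x,\delta_1))/n>t$ for every $n\ge N$; by nesting in $\delta$, the same bound holds with $\delta_1$ replaced by any $\delta\le\delta_1$. For the finitely many remaining indices $1\le n<N$, the auxiliary fact lets me choose $\delta_*\in(0,\delta_1]$ such that $-\log\mu(B_n(x,\delta_*))/n>t$ for each of those $n$. Then for every $\delta\le\delta_*$ and every $n\ge 1$ we have $-\log\mu(B_n(x,\delta))/n>t$, hence $A(\delta)\ge t$, and therefore $\lim_{\delta\to 0}A(\delta)\ge t$. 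Letting $t\uparrow\lim_{\delta\to 0}B(\delta)$ closes the argument. The only place requiring any real care is the separate treatment of the atomic case and the uniform handling of the finitely many small $n$, but both reduce cleanly to the continuity-of-measure observation above.
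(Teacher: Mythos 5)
Your proof is correct and rests on the same key fact as the paper's: for each fixed $n$, $\mu(B_n(x,\delta))\to\mu(\{x\})$ as $\delta\to 0$, which lets you control the finitely many small indices $1\le n<N$ once the tail $n\ge N$ is handled by the definition of $\liminf$. The only difference is presentational — you split into the atomic and non-atomic cases up front and argue directly, whereas the paper runs a proof by contradiction in which $\mu(\{x\})>0$ emerges as the absurdity — but the mechanism is the same.
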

\begin{proof}For any fixed $x\in X$, it is clear that
$$\lim_{\delta\to 0}\inf_{n\ge1}\frac{-\log\mu(B_n(x,\delta))}{n}
\le \lim_{\delta\to0}\liminf_{n\to+\infty}\frac{-\log\mu(B_n(x,\delta))}{n}.$$

Now we shall to show
$$\lim_{\delta\to0}\liminf_{n\to+\infty}\frac{-\log\mu(B_n(x,\delta))}{n}\le\lim_{\delta\to 0}\inf_{n\ge1}\frac{-\log\mu(B_n(x,\delta))}{n}.$$
We have two cases. If $c:=\lim\limits_{\delta\to0}\liminf\limits_{n\to+\infty}\frac{-\log\mu(B_n(x,\delta))}{n}=0$, there is nothing to prove. Now we assume $c>0$. If the equality \eqref{eq-3} does not hold, this is,
$$\lim_{\delta \rightarrow 0} \inf_{n \ge 1}\frac{-\log \mu(B_n(x,\delta))}{n}
< \lim_{\delta \rightarrow 0} \liminf_{n \rightarrow \infty}\frac{-\log \mu(B_n(x,\delta))}{n}.$$
Then there exists $0<\varepsilon<c$ such that
\begin{align}\label{eq4}
\lim_{\delta\to0}\inf_{n\ge 1}\frac{-\log \mu(B_n(x,\delta))}{n}<c-\varepsilon,
\end{align}
and $\delta_0>0$ such that fore every $0<\delta\le\delta_0$ one has
$$\liminf_{n\to+\infty}\frac{-\log \mu(B_n(x,\delta))}{n}>c-\frac{\varepsilon}{2}.$$
Then there exists $N_0\in\N$ such that $\frac{-\log\mu(B_m(x,\delta_0))}{m}>c-\frac{\varepsilon}{2}$ for every $m\ge N_0$. For any $0<\delta\le\delta_0$ and $m\ge N_0$, since $-\log\mu(B_m(x,\delta_0))\le-\log\mu(B_m(x,\delta))$, one has
\begin{align}\label{eq5}
\frac{-\log\mu(B_m(x,\delta))}{m}>c-\frac{\varepsilon}{2}.
\end{align}
By (\ref{eq4}), there exists $0<\delta_1<\delta_0$ such that for every $0<\delta\le\delta_1$ we have $\inf\limits_{n\geq 1}\frac{-\log\mu(B_n(x,\delta))}{n}<c-\varepsilon$. Combining this with (\ref{eq5}), one has that for every $0<\delta\le\delta_1$
$$\inf_{n\ge 1}\frac{-\log\mu(B_n(x,\delta))}{n}=\min_{1\le n\le N_0}\frac{-\log\mu(B_n(x,\delta))}{n}< c-\varepsilon.$$
Let $\delta\to0$, we have $\min\limits_{1\le n\le N_0}\frac{-\log\mu(\{x\})}{n}\le c-\varepsilon$, this implies $\mu(\{x\})>0$. Then we have the following observation
$$c=\lim_{\delta\to0}\liminf_{n\to+\infty}\frac{-\log\mu(B_n(x,\delta))}{n}\le\liminf_{n\to+\infty}\frac{-\log\mu(\{x\})}{n}=0,$$
which is contradicted with $c>0$. Thus the equality \eqref{eq-3} holds.
\end{proof}

For any $t\in\R^+$, we set $\lfloor t\rfloor=\max\{n\in\N, n\le t\}$. Then we have the following result.
\begin{lem}\label{lm1}Let $(X,T)$ be a TDS and $\mu\in M(X,T)$. Then for every $x\in X$ we have
$$\frac{1}{a_\mu(x)}= \lim_{\delta\to0}\inf_{n\ge 1}\frac{-\log\mu(B_n(x,\delta))}{n},$$
where for any constant $c\ge 0$ we note $\frac{c}{0}=+\infty$.
\end{lem}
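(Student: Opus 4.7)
My plan is to start from Lemma~\ref{3.2}, which rewrites
$$\frac{1}{a_\mu(x,\delta)} = \inf_{V \in \mathscr{B}_\mu^+}\frac{-\log\mu(V)}{s_\mu(x,V,\delta)},$$
and then sandwich this infimum between two expressions built from Bowen balls $B_n(x,\delta)$. Letting $\delta\to 0$ should give the conclusion. Throughout I may assume $x\in\mathrm{supp}(\mu)$ and $\mu(\{x\})=0$; the remaining cases are covered by Lemma~\ref{lem-1}, which forces $a_\mu(x)\in\{0,+\infty\}$ and makes both sides of the identity easy to check directly.

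For the upper bound I would use the test set $V=B_n(x,\delta')$ with $0<\delta'<\delta$, which is open, contains $x$, and hence has positive $\mu$-measure. Every $y\in V$ satisfies $\rho(T^k x,T^k y)<\delta'<\delta$ for $k=0,\dots,n$, so the set $\{y\in V:\rho(T^k x,T^k y)>\delta\}$ is empty for such $k$, giving $s_\mu(x,V,\delta)\ge n+1$. Therefore, for every $n\ge 1$,
$$\frac{1}{a_\mu(x,\delta)}\le\frac{-\log\mu(B_n(x,\delta'))}{n+1}\le\frac{-\log\mu(B_n(x,\delta'))}{n}.$$

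For the lower bound I would unpack an arbitrary $V\in\mathscr{B}_\mu^+$ with finite $n:=s_\mu(x,V,\delta)\ge 1$: by the minimality in the definition of $s_\mu$, for every $k<n$ the set $\{y\in V:\rho(T^k x,T^k y)>\delta\}$ is $\mu$-null, so $\mu$-a.e.\ $y\in V$ satisfies $\rho(T^k x,T^k y)\le\delta$ for all $k<n$, and this places $y$ inside $B_{n-1}(x,\delta')$ for any $\delta'>\delta$. Consequently $\mu(V)\le\mu(B_{n-1}(x,\delta'))$ and
$$\frac{-\log\mu(V)}{s_\mu(x,V,\delta)}\ge\frac{-\log\mu(B_{n-1}(x,\delta'))}{n}.$$
The degenerate cases $s_\mu=0$ (ratio $=+\infty$, contributes nothing) and $s_\mu=+\infty$ (which forces $\mu(V)\le\mu(B_m(x,\delta'))$ for every $m$, hence also the right side of the target identity to vanish) have to be handled separately but cause no trouble.

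Sending $\delta\to 0$ with $\delta'=\delta/2$ above and $\delta'=2\delta$ below, both bounds converge to $\lim_{\delta\to 0}\inf_{n\ge 1}\frac{-\log\mu(B_n(x,\delta))}{n}$. The main technical obstacle I foresee is justifying that the offset between the denominators $n$ and $n+1$ is immaterial in the limit; this is where Lemma~\ref{333} is decisive, since it identifies $\lim_{\delta\to 0}\inf_{n\ge 1}\frac{-\log\mu(B_n(x,\delta))}{n}$ with $\lim_{\delta\to 0}\liminf_{n\to\infty}\frac{-\log\mu(B_n(x,\delta))}{n}$, a quantity in which the shift from $n$ to $n+1$ is manifestly negligible, and moreover the standing assumption $\mu(\{x\})=0$ rules out the $m=0$ term being the minimizer in $\inf_{m\ge 0}\frac{-\log\mu(B_m(x,\delta))}{m+1}$ as $\delta\to 0$.
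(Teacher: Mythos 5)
Your argument and the paper's coincide in one direction: to show $\tfrac{1}{a_\mu(x,\delta)} \le \inf_{n\ge 1}\tfrac{-\log\mu(B_n(x,\delta))}{n}$, both plug the Bowen ball itself into the infimum from Lemma~\ref{3.2} (the paper uses $V=B_n(x,\delta)$ directly and notes $s_\mu(x,B_n(x,\delta),\delta)>n$; your extra $\delta'<\delta$ is harmless but unnecessary). Your preliminary reduction to $x\in\mathrm{supp}(\mu)$, $\mu(\{x\})=0$ via Lemma~\ref{lem-1} is also fine.

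The two proofs diverge on the other inequality, and here your route has a real unfilled step. You take an arbitrary $V\in\mathscr{B}_\mu^+$ with $n:=s_\mu(x,V,\delta)$ finite and deduce $\mu(V)\le\mu(B_{n-1}(x,\delta'))$ for $\delta'>\delta$, yielding
\begin{equation*}
\frac{-\log\mu(V)}{s_\mu(x,V,\delta)}\ \ge\ \frac{-\log\mu(B_{n-1}(x,\delta'))}{n}\ \ge\ \inf_{m\ge 0}\frac{-\log\mu(B_m(x,\delta'))}{m+1}.
\end{equation*}
The quantity on the right is \emph{not} $\inf_{m\ge 1}\tfrac{-\log\mu(B_m(x,\delta'))}{m}$; in fact it is term-by-term \emph{smaller}, since $\tfrac{-\log\mu(B_m)}{m+1}\le\tfrac{-\log\mu(B_m)}{m}$. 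Lemma~\ref{333} as stated compares $\inf_{n\ge1}\tfrac{-\log\mu(B_n)}{n}$ with $\liminf_{n\to\infty}\tfrac{-\log\mu(B_n)}{n}$ and cannot be invoked as a black box to dispose of the denominator shift; you would need to re-run the proof of Lemma~\ref{333} for the shifted quantity $\inf_{m\ge 0}\tfrac{-\log\mu(B_m(x,\delta'))}{m+1}$, again using $\mu(\{x\})=0$ to push the minimizer to the tail as $\delta'\to 0$. That variant does hold and your plan would then close, but it is a nontrivial addition you have identified only as a ``foreseen obstacle,'' not resolved.

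The paper avoids the off-by-one entirely. Instead of an arbitrary $V$, it fixes $a<a_\mu(x)$ and takes the witness $V_\delta$ that violates the rate-$a$ sensitivity condition; the relevant Bowen ball index is then $N=\lfloor -a\log\mu(V_\delta)\rfloor$, which \emph{matches} the denominator exactly, and $N\to\infty$ automatically because $V_\delta\subseteq B(x,2\delta)$ forces $\mu(V_\delta)\to 0$ when $\mu(\{x\})=0$. That gives $\tfrac{1}{a}\ge\lim_{\delta\to 0}\inf_{n\ge1}\tfrac{-\log\mu(B_n(x,\delta))}{n}$ directly with no denominator shift to excuse. In short: your approach is viable but trades the paper's tailored choice of $V_\delta$ for a cleaner-looking ``arbitrary $V$'' start, at the price of an index mismatch that genuinely needs a Lemma~\ref{333}-type argument, not merely a citation of it.

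Two minor additional cautions. First, in the $s_\mu(x,V,\delta)=+\infty$ case you claim ``the right side of the target identity vanishes,'' but what you actually get is $\inf_{m}\tfrac{-\log\mu(B_m(x,\delta'))}{m+1}=0$ for that particular $\delta'>\delta$; you must note that this happens for all small $\delta$ (because $a_\mu(x,\delta)$ is monotone in $\delta$) before concluding the limit as $\delta'\to 0$ is $0$. Second, when $s_\mu(x,V,\delta)=1$ your estimate involves $B_0(x,\delta')$, i.e.\ the $m=0$ term, which is exactly why the infimum must run over $m\ge 0$ and why the $\mu(\{x\})=0$ hypothesis is essential to rule out that term dominating in the limit.
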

\begin{proof}For every fixed $x\in X$, there are two cases. In the case $\mu(\{x\})>0$, we have $a_\mu(x)=+\infty$ by  Lemma \ref{lem-1} (2).
Then $$0\le \lim_{\delta\to0}\inf_{n\ge1}\frac{-\log\mu(B_n(x,\delta))}{n}\le \lim_{\delta\to0}\inf_{n\ge1}\frac{-\log\mu(\{x\})}{n}=0= \frac{1}{a_\mu(x)}.$$
Hence, $\lim\limits_{\delta\to0}\inf\limits_{n\ge1}\frac{-\log\mu(B_n(x,\delta))}{n}= \frac{1}{a_\mu(x)}$ when $\mu(\{x\})>0$.

Now we assume $\mu(\{x\})=0$. Firstly, we show that
\begin{align}\label{eq-4}\frac{1}{a_\mu(x)}\ge\lim_{\delta\to 0}\inf_{n\ge 1}\frac{-\log\mu(B_n(x,\delta))}{n}.\end{align}
If $a_\mu(x)=0$, there is nothing to prove.
We assume that $a_\mu(x)>0$. For any $a\in(0,a_\mu(x))$ there exists $\delta>0$ such that $a<a_{\mu}(x,\delta)$. Thus there exists $V_{\delta}\in\mathscr{B}_{\mu}^+$ such that
$$\mu(\{y\in V_\delta:d(T^nx,T^ny)>\delta\})=0$$
for all $n\le-a\log\mu(V_\delta)$. Then
$$V_\delta\subseteq \overline{B_{\lfloor-a\log\mu(V_\delta)\rfloor}(x,\delta)}\subseteq B_{\lfloor-a\log\mu(V_\delta)\rfloor}(x,2\delta)\mod\mu.$$
Hence
\begin{equation*}
\begin{aligned}
\frac{-\log\mu(V_\delta)}{\lfloor-a\log\mu(V_\delta)\rfloor}
&\ge&\frac{-\log\mu(B_{\lfloor-a\log\mu(V_\delta)\rfloor}(x,2\delta))}{\lfloor-a\log\mu(V_\delta)\rfloor}\\
&\ge& \inf_{n\ge1}\frac{-\log\mu(B_n(x,2\delta))}{n},
\end{aligned}
\end{equation*}
Notice that, $V_{\delta}\subseteq B(x,2\delta)$ then $\mu(V_\delta)\to0$ as $\delta\to 0$ since $\mu(\{x\})=0$. We have $\lfloor-a\log\mu(V_\delta)\rfloor\to+\infty$ as $\delta\to 0$. Then
$$\frac{1}{a}=\lim_{\delta\to0}\frac{-\log\mu(V_\delta)}{\lfloor-a\log\mu(V_\delta)\rfloor}\ge\lim_{\delta\to0}\inf_{n\ge 1}\frac{-\log\mu(B_n(x,\delta))}{n}.$$
By taking $a\to a_\mu(x)$, we have $\frac{1}{a_\mu(x)}\ge\lim\limits_{\delta\to0}
\inf\limits_{n\ge 1}\frac{-\log\mu(B_n(x,\delta))}{n}$ .

Next, we shall show $\frac{1}{a_\mu(x)}\le\lim\limits_{\delta\to0}\inf\limits_{n\ge 1}\frac{-\log\mu(B_n(x,\delta))}{n}.$ If $a_\mu(x)=+\infty$, it is obviously true. In the case $a_\mu(x)<+\infty$, if there exists $\delta_0>0$ and $n_0\in\N$ such that $\mu(B_{n_0}(x,\delta_0))=0$, then for any $0<\delta<\delta_0$ and $n\ge n_0$ we have $\mu(B_{n}(x,\delta))=0$, thus by Lemma \ref{333}
$$\lim_{\delta\to0}\inf_{n\ge 1}\frac{-\log\mu(B_n(x,\delta))}{n}=\lim_{\delta \to 0}\liminf_{n\to +\infty}\frac{-\log\mu(B_n(x,\delta))}{n}=+\infty\ge\frac{1}{a_{\mu}(x)}.$$

If for any $\delta>0$ and $n\in\N$, $\mu(B_n(x,\delta))>0$, one has $s_{\mu}(x,B_{n}(x,\delta),\delta)>n$, combining with Lemma \ref{3.2}, one has
\begin{align*}
\inf_{n\ge 1}\frac{-\log\mu(B_n(x,\delta))}{n}&\ge \inf_{n\ge 1}\frac{-\log\mu(B_n(x,\delta))}{s_{\mu}(x,B_n(x,\delta),\delta)}\\
&\ge\inf_{V\in\mathscr{B}_{\mu}^+}\frac{-\log\mu(V)}{s_{\mu}(x,V,\delta)}=\frac{1}{a_{\mu}(x,\delta)}.
\end{align*}
By taking $\delta\to 0$, one has
$$\lim_{\delta\to0}\inf_{n\ge 1}\frac{-\log\mu(B_n(x,\delta))}{n}\ge\frac{1}{a_{\mu}(x)}.$$
Combing this with \eqref{eq-4}, we prove the conclusion in the case $\mu(\{x\})=0$, and this completes our proof.
\end{proof}

Now we are ready to prove Theorem A.
\begin{proof}[Proof of Theorem A] We fix $\mu\in M(X,T)$. Notice that for $\mu$-a.e. $x\in X$ $$h_\mu(T,x)=\lim_{\delta\to0}\liminf_{n\to\infty}\frac{-\log\mu(B_n(x,\delta))}{n}.$$
Then by Lemma \ref{333} and Lemma \ref{lm1}, we have for $\mu$-a.e. $x\in X$
\begin{equation}\nonumber
\begin{aligned}\frac{1}{a_\mu(x)}&=\lim_{\delta\to0}\inf_{n\ge 1}\frac{-\log\mu(B_n(x,\delta))}{n}\\
&= \lim_{\delta\to0}\liminf_{n\to\infty}\frac{-\log\mu(B_n(x,\delta))}{n}=h_\mu(T,x).
\end{aligned}
\end{equation}

Now we assume that $\mu$ is ergodic. In this case, the Brin-Katok's entropy formula gives that $h_\mu(T)=h_\mu(T,x)$ for $\mu$-a.e. $x\in X$. Combing with the above discussion, we have $a_\mu(x)=\frac{1}{h_\mu(T)}$ for $\mu$-a.e. $x\in X$. In this case, $T$ is measure-theoretic restricted sensitive if and only if $h_{\mu}(X,T)>0$. This ends the proof of Theorem \ref{thm-1}.
\end{proof}

\section{topological restricted sensitivity}
In this section, we consider the topological version of restricted sensitivity. We will introduce two types of topological restricted sensitivities and two types of restricted asymptotic rates respectively, and we study their relations with measure-theoretic restricted asymptotic rate and topological entropy.

\subsection{Type-I Topological Restricted Sensitivity}\label{subsec-type1}
Let $(X,T)$ be a TDS, for any subset $V$ of $X$, we define {\it the capacity of $V$} by
$$c(V)=\inf_{x\in X}\limsup_{n\to\infty}\frac{1}{n}\sum_{i=0}^{n-1}\chi_V(T^ix).$$
where $\chi_V(x)=1$ if and only if $x\in V$. We set
$$OC(X,T):=\{V\subseteq X,\ V\ is\ open\ and\ c(V)>0\}.$$
Note that, $OC(X,T)$ is not empty since $X\in OC(X,T)$. The capacity and measures of a subset of $X$ have the following relationship.
\begin{lem}\label{capacity}Let $(X,T)$ be a TDS. Then for any open subset $V$ of $X$ one has
$$c(V)=\inf\limits_{\mu\in M(X,T)}\mu(V).$$
\end{lem}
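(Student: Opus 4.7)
The plan is to prove the two inequalities separately. Both rest on two classical tools: the Birkhoff ergodic theorem applied to $\chi_V$, and the Krylov--Bogolyubov construction of invariant measures as weak-$*$ limits of Cesàro averages of Dirac masses. The role of the openness of $V$ is essentially contained in the Portmanteau theorem, via lower semicontinuity of $\mu \mapsto \mu(V)$.

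For the direction $c(V)\le \inf_{\mu\in M(X,T)}\mu(V)$, I would fix an arbitrary $\mu\in M(X,T)$ and apply the Birkhoff ergodic theorem to $\chi_V$: there is a $T$-invariant function $\hat\chi_V\in L^1(\mu)$ with $\int \hat\chi_V\, d\mu=\mu(V)$, and for $\mu$-a.e.\ $x\in X$,
$$\lim_{n\to\infty}\frac{1}{n}\sum_{i=0}^{n-1}\chi_V(T^ix)=\hat\chi_V(x).$$
Since $\hat\chi_V$ has average $\mu(V)$, it must take a value $\le \mu(V)$ on a set of positive $\mu$-measure. Pick such a point $x_0$; then $\limsup_{n}\frac{1}{n}\sum_{i=0}^{n-1}\chi_V(T^ix_0)\le\mu(V)$, so $c(V)\le\mu(V)$. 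Taking the infimum over $\mu$ completes this direction.

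For the direction $c(V)\ge\inf_{\mu\in M(X,T)}\mu(V)$, I would fix an arbitrary $x\in X$ and select a subsequence $(n_k)$ realizing the limsup, i.e.
$$\lim_{k\to\infty}\frac{1}{n_k}\sum_{i=0}^{n_k-1}\chi_V(T^ix)=\limsup_{n\to\infty}\frac{1}{n}\sum_{i=0}^{n-1}\chi_V(T^ix).$$
Consider the empirical measures $\mu_{n_k}=\frac{1}{n_k}\sum_{i=0}^{n_k-1}\delta_{T^ix}$. By weak-$*$ compactness of $M(X)$ I pass to a further subsequence so that $\mu_{n_k}\to\mu_x$; the standard Krylov--Bogolyubov argument (comparing $\mu_{n_k}$ with $T_*\mu_{n_k}$) guarantees $\mu_x\in M(X,T)$. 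Since $V$ is open, Portmanteau gives $\mu_x(V)\le\liminf_k\mu_{n_k}(V)$, hence
$$\inf_{\mu\in M(X,T)}\mu(V)\;\le\;\mu_x(V)\;\le\;\limsup_{n\to\infty}\frac{1}{n}\sum_{i=0}^{n-1}\chi_V(T^ix).$$
Taking the infimum over $x\in X$ yields the desired inequality.

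Neither direction presents a genuine obstacle. The only subtle point is the direction of the Portmanteau inequality: openness of $V$ is precisely what ensures $\mu\mapsto\mu(V)$ is lower semicontinuous, giving $\mu_x(V)\le\liminf\mu_{n_k}(V)$ rather than the reverse inequality (which would be false in general and would fail to yield the conclusion). It is worth noting, for the first direction, that one cannot simply apply Fatou's lemma to the averages $\frac{1}{n}\sum\chi_V\circ T^i$, since the ``reverse'' Fatou inequality goes the wrong way; this is why the pointwise Birkhoff theorem is the natural tool.
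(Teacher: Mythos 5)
Your proof is correct, and your handling of the direction $c(V)\ge\inf_{\mu}\mu(V)$ is essentially identical to the paper's: a weak-$*$ limit of empirical measures along a limsup-realizing subsequence, Krylov--Bogolyubov for invariance, and Portmanteau (using openness of $V$) to pass the inequality to the limit measure. For the direction $c(V)\le\inf_{\mu}\mu(V)$ you take a genuinely different route: you apply the Birkhoff ergodic theorem directly to $\chi_V$ under the possibly non-ergodic measure $\mu$, obtain the invariant limit function $\hat\chi_V$ with mean $\mu(V)$, and conclude it takes a value $\le\mu(V)$ on a positive-measure set. The paper instead invokes the ergodic decomposition to replace $\mu$ by an ergodic $\nu$ with $\nu(V)\le\mu(V)$ and then uses generic points for $\nu$. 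Your version is leaner (it avoids ergodic decomposition entirely) and arguably tighter: the paper's appeal to ``generic points'' is stated in terms of convergence of Birkhoff averages of \emph{continuous} functions, which does not by itself give convergence for the indicator $\chi_V$ of an open set (Portmanteau would only yield $\liminf\ge\nu(V)$, the wrong direction); what really carries that step is Birkhoff's theorem applied to $\chi_V\in L^1(\nu)$ with $\nu$ ergodic, which is exactly the tool you reach for. So your argument closes a small imprecision in the paper's write-up. Your closing remarks — why openness is used only through lower semicontinuity of $\mu\mapsto\mu(V)$, and why reverse Fatou cannot replace Birkhoff — are both accurate.
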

\begin{proof}Note that, for any $x\in X$ one has $\nu:=\limsup\limits_{n\to\infty}\frac{1}{n}\sum\limits_{i=0}^{n-1}\delta_{T^ix}\in M(X,T)$. Thus for any subset $V$ of $X$,
$$\inf_{\mu\in M(X,T)}\mu(V)\le\nu(V)=\limsup\limits_{n\to\infty}\frac{1}{n}\sum_{i=0}^{n-1}\delta_{T^ix}(V)=
\limsup\limits_{n\to\infty}\frac{1}{n}\sum_{i=0}^{n-1}\chi_V(T^ix)$$
for every $x\in X$. This implies $c(V)\ge \inf\limits_{\mu\in M(X,T)}\mu(V)$.

Moreover, for any $\mu\in M(X,T)$, by Theorem \ref{ergodic decomposition}, there exists $\nu\in M^e(X,T)$ such that $\nu(V)\le\mu(V)$. Since $\nu$ is ergodic, there exists $x_0\in X$ which is a generic point for $\mu$. This implies that for any open subset $V$, one has
$$\limsup\limits_{n\to\infty}\frac{1}{n}\sum_{i=0}^{n-1}\chi_V(T^ix_0)=\int\chi_V(x)d\nu(x)=\nu(V).$$
Then we have $\mu(V)\ge c(V)$ for any $\mu\in M(X,T)$. Thus, $c(V)\le\inf\limits_{\mu\in M(X,T)}\mu(V)$. To summing up, one has
 $$c(V)=\inf\limits_{\mu\in M(X,T)}\mu(V)$$
for every open subset of $X$.
\end{proof}

Then the topological  restricted sensitivity is defined as follows.
\begin{de}\label{type1}Let $(X,T)$ be a TDS. For any $x \in X$ and $\delta\in(0,\infty)$, $T$ is called {\it type-I topological restricted sensitive (TRS-I for short) at $x$ w.r.t. $\delta$} if there exists $a>0$ such that for every subset $V\in OC(X,T)$, there exists nonnegative integer $n \le -a\log c(V)$ and $y\in V$ such that
$$\rho(T^nx,T^ny)>\delta.$$
And we say $T$ is {\it TRS-I} if there exists a dense subset $D\subset X$ such that for any $x \in D$ there exists $\delta(x)>0$ such that $T$ is TRS-I at
$x$ w.r.t $\delta(x)$.
\end{de}

For any $\delta>0$ and $x\in X$. If $T$ is  TRS-I at $x$ w.r.t. $\delta$, we further consider $a_{1}(x,\delta)$, the {\it topological restricted asymptotic rate at $x$ with respect to $\delta$}, which is defined to be the infimum over all $a>0$ satisfies the property in Definition \ref{type1}. If $T$ is not TRS-I at $x$ w.r.t. $\delta$, then let $a_{1}(x,\delta)=+\infty$.

Similar as in measure-theoretic case. We can also see $a_{1}(x,\delta)$ from another point of view.
Let $\delta>0$ and $V\in OC(X,T)$, we denote the set of sensitive time with respect to the sensitivity constant $\delta$ by
$$\tilde{S}(x,V,\delta)=\{n\ge0:\text{there exists } y\in V\text{ such that } \rho(T^nx,T^ny)>\delta\}.$$
 The first sensitive time  is denoted by
\[
\tilde{s}(x,V,\delta) = \left\{
\begin{array}{ll}
\min \{n\in \tilde{S}(x,V,\delta)\}, & \text{if } \tilde{S}(x,V,\delta)\neq \phi,\\
\\
+\infty, & \text{if } \tilde{S}(x,V,\delta)=\phi.\\
\end{array}
\right.
\]
We have the following observation.
\begin{lem}\label{tp.re.eq}
Let $(X,T)$ be a TDS. Then for any $x \in X$ and $\delta>0$ one has
$$\frac{1}{a_1(x,\delta)}=\inf_{V\in OC(X,T)}\frac{-\log(c(V))}{\tilde{s}(x,V,\delta)},$$
where for any constant $c\ge0$ we note $\frac{c}{0}=+\infty$.
\end{lem}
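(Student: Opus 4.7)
This lemma is the topological analogue of Lemma \ref{3.2}, and I would follow the same strategy line by line, replacing the measure $\mu$ and the collection $\mathscr{B}_\mu^+$ by the capacity $c(\cdot)$ and the collection $OC(X,T)$, and replacing ``the set $\{y\in V:\rho(T^nx,T^ny)>\delta\}$ has positive $\mu$-measure'' by ``there exists $y\in V$ with $\rho(T^nx,T^ny)>\delta$.'' The proof splits according to whether $a_1(x,\delta)$ is finite, and no new ingredient beyond Definition \ref{type1} and the convention $\frac{c}{0}=+\infty$ is needed.

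Assume first that $a_1(x,\delta)<+\infty$. For the inequality $\frac{1}{a_1(x,\delta)}\le\inf_{V\in OC(X,T)}\frac{-\log c(V)}{\tilde s(x,V,\delta)}$, I pick any $a>a_1(x,\delta)$ and apply Definition \ref{type1}: for every $V\in OC(X,T)$ there exist $0\le n\le -a\log c(V)$ and $y\in V$ with $\rho(T^nx,T^ny)>\delta$, so $\tilde s(x,V,\delta)\le -a\log c(V)$, and hence $\frac{1}{a}\le\frac{-\log c(V)}{\tilde s(x,V,\delta)}$. Taking the infimum over $V$ and then sending $a\downarrow a_1(x,\delta)$ yields the desired bound. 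For the reverse inequality, any $a<a_1(x,\delta)$ fails to witness TRS-I at $x$ w.r.t.\ $\delta$, so one can choose $V_a\in OC(X,T)$ such that no $n\le -a\log c(V_a)$ together with any $y\in V_a$ realises $\rho(T^nx,T^ny)>\delta$. This says $\tilde s(x,V_a,\delta)>-a\log c(V_a)$, so $\frac{1}{a}>\frac{-\log c(V_a)}{\tilde s(x,V_a,\delta)}\ge\inf_{V}\frac{-\log c(V)}{\tilde s(x,V,\delta)}$, and letting $a\uparrow a_1(x,\delta)$ closes the gap.

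In the remaining case $a_1(x,\delta)=+\infty$, the same ``$V_a$'' construction applies for every $a\in(0,+\infty)$, producing $V_a\in OC(X,T)$ with $\tilde s(x,V_a,\delta)>-a\log c(V_a)$, hence $\inf_V\frac{-\log c(V)}{\tilde s(x,V,\delta)}<\frac{1}{a}$ for every $a>0$; letting $a\to+\infty$ forces the infimum to be $0=\frac{1}{a_1(x,\delta)}$. I do not foresee any serious obstacle: the only delicate point is bookkeeping the convention $\frac{c}{0}=+\infty$, which is precisely what lets us discard those $V$ for which $x$ is already $\delta$-separated from some $y\in V$ at time $0$ (where $\tilde s(x,V,\delta)=0$). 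Beyond that, the argument is essentially a transcription of Lemma \ref{3.2} under the dictionary $(\mu,\mathscr{B}_\mu^+)\leftrightarrow(c,OC(X,T))$; Lemma \ref{capacity} is not needed here, since the proof manipulates $c(V)$ only formally.
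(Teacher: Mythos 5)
Your proof is correct and follows essentially the same route as the paper: both cases (finite and infinite $a_1(x,\delta)$) are handled by the same pairing of ``$a$ above'' and ``$a$ below'' the asymptotic rate, exactly as in the paper's Lemma \ref{3.2} and its proof of Lemma \ref{tp.re.eq}. The one point the paper flags that you also identify is the convention $\tfrac{c}{0}=+\infty$ when $\tilde s(x,V,\delta)=0$, which makes the inequality $\tfrac{1}{a}\le\tfrac{-\log c(V)}{\tilde s(x,V,\delta)}$ vacuous for such $V$; you handle it correctly.
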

\begin{proof}
We fix any $x\in X$ and $\delta>0$. Firstly, we shall show
\begin{equation}\label{eq-a-1-ss}
\frac{1}{a_{1}(x,\delta)}\ge\inf_{V\in OC(X,T)}\frac{-\log(c(V))}{\tilde{s}(x,V,\delta)}.
\end{equation}
If $a_{1}(x,\delta)=0$ there is nothing to prove. Now we assume $a_{1}(x,\delta)>0$, for any $0<a<a_{1}(x,\delta)$, there exists an open subset $V_a$ of $X$ with positive capacity, such that every $n\le -a\log{c(V_a)}$ and $y\in V$ one has $\rho(T^nx, T^ny)\leq\delta$. This implies $\tilde{s}(x,V_a,\delta)>-a\log c(V_a)$, thus
$$\inf_{V\in OC(X,T)}\frac{-\log{c(V)}}{\tilde{s}(x,V,\delta)}\le\frac{-\log{c(V_a)}}{\tilde{s}(x,V_a,\delta)}<\frac{1}{a}.$$
By taking $a\to a_{1}(x,\delta)$, we can get (\ref{eq-a-1-ss}).

Now we shall show
\begin{equation}\label{eq-a-1-s}
   \frac{1}{a_{1}(x,\delta)}\le\inf_{V\in OC(X,T)}\frac{-\log(c(V))}{\tilde{s}(x,V,\delta)}.
\end{equation}
Since it is obvious when $a_{1}(x,\delta)=+\infty$, we assume $a_{1}(x,\delta)<+\infty$. For any $a>a_{1}(x,\delta)$ and open set $V$ with $c(V)>0$, there exists a nonnegative integer $n \le -a\log(c(V))$ such that
$$\{\ y\in V: \rho(T^nx,T^ny)>\delta \} \neq \emptyset.$$
Hence $\tilde{s}(x,V,\delta) \le  -a\log c(V)$ for any open subset $V$ with positive capacity. If $\tilde{s}(x,V,\delta)>0$, then let $a\to a_1(x,\delta)$ we know \ref{eq-a-1-s} holds. And this ends the proof.
\end{proof}

Note that $a_{1}(x,\delta)$ decrease as $\delta \to 0$, then the {\it type-I topological restricted asymptotic rate at $x$} is defined by
 $$a_1(x)=\lim_{\delta\to0}a_1(x,\delta),$$
and let $a_{1}(x)=+\infty$ if for any $\delta>0$, $a_{1}(x,\delta)=+\infty$. The following Theorem shows the relationship between $a_\mu(x)$ and $a_1(x)$.
\begin{thmB}\label{thm-2}Let $(X,T)$ be a TDS and $x \in X$. Then one has
 $$a_1(x)\le\inf_{\mu\in M(X,T)}a_\mu(x).$$
Moreover, $\inf\limits_{x\in X}a_1(x)\le \frac{1}{h_{top}(T)}$, where $h_{top}(T)$ is the topological entropy of $(X,T)$.
\end{thmB}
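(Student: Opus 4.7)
The plan is to reduce both inequalities to the dual characterizations of the restricted asymptotic rates provided by Lemmas \ref{3.2} and \ref{tp.re.eq}, and then invoke Theorem A together with the variational principle.

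For the first inequality $a_{1}(x)\le\inf_{\mu}a_{\mu}(x)$, I would fix $\mu\in M(X,T)$ and $\delta>0$ and compare
$$\frac{1}{a_{1}(x,\delta)}=\inf_{V\in OC(X,T)}\frac{-\log c(V)}{\tilde{s}(x,V,\delta)}\quad\text{with}\quad\frac{1}{a_{\mu}(x,\delta)}=\inf_{V\in\mathscr{B}_{\mu}^{+}}\frac{-\log\mu(V)}{s_{\mu}(x,V,\delta)}.$$
The point is that every $V\in OC(X,T)$ already sits inside $\mathscr{B}_{\mu}^{+}$ because Lemma \ref{capacity} yields $\mu(V)\ge c(V)>0$. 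This comparison is favourable in both numerator and denominator: $-\log c(V)\ge -\log\mu(V)$ and $\tilde{s}(x,V,\delta)\le s_{\mu}(x,V,\delta)$, the latter because positive $\mu$-measure of the sensitive fibre forces it to be non-empty. Thus for each such $V$,
$$\frac{-\log c(V)}{\tilde{s}(x,V,\delta)}\ge\frac{-\log\mu(V)}{s_{\mu}(x,V,\delta)}\ge\frac{1}{a_{\mu}(x,\delta)},$$
and taking the infimum over $V\in OC(X,T)$ gives $\frac{1}{a_{1}(x,\delta)}\ge\frac{1}{a_{\mu}(x,\delta)}$. Letting $\delta\to 0$ yields $a_{1}(x)\le a_{\mu}(x)$; since $\mu$ was arbitrary, the first inequality follows.

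For the second inequality $\inf_{x}a_{1}(x)\le 1/h_{\rm top}(T)$, if $h_{\rm top}(T)=0$ there is nothing to prove, so assume it is positive. Using the variational principle together with ergodic decomposition (Theorem \ref{ergodic decomposition}), for any $\varepsilon\in(0,h_{\rm top}(T))$ I can pick an ergodic $\mu\in M^{e}(X,T)$ with $h_{\mu}(T)>h_{\rm top}(T)-\varepsilon$. Theorem A then guarantees a $\mu$-generic point $x_{0}$ at which $a_{\mu}(x_{0})=1/h_{\mu}(T,x_{0})=1/h_{\mu}(T)$. Applying the first inequality at $x_{0}$ gives
$$\inf_{x\in X}a_{1}(x)\le a_{1}(x_{0})\le a_{\mu}(x_{0})=\frac{1}{h_{\mu}(T)}<\frac{1}{h_{\rm top}(T)-\varepsilon}.$$
Sending $\varepsilon\to 0$ finishes the proof.

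The only delicate step is the comparison $\tilde{s}(x,V,\delta)\le s_{\mu}(x,V,\delta)$ combined with the inclusion $OC(X,T)\subseteq\mathscr{B}_{\mu}^{+}$; once Lemma \ref{capacity} is in hand this is direct, so I do not anticipate a genuine obstruction. The main conceptual ingredient is really Theorem A, which converts the measure-theoretic restricted rate at a generic point into a reciprocal entropy, allowing the topological rate to be squeezed against $h_{\rm top}(T)$ via the variational principle.
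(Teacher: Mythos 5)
Your proof is correct and follows essentially the same route as the paper's: Lemma~\ref{tp.re.eq} plus Lemma~\ref{capacity} for the first inequality, and Theorem~A together with the variational principle for the second. The only cosmetic differences are that you compare $\frac{-\log c(V)}{\tilde{s}(x,V,\delta)}\ge\frac{-\log\mu(V)}{s_{\mu}(x,V,\delta)}$ directly for each fixed $\mu$ (where the paper inserts an extra $\inf_V\sup_\mu\ge\sup_\mu\inf_V$ step), and for the second inequality you pass to an ergodic $\mu$ with $h_\mu(T)>h_{\rm top}(T)-\varepsilon$ and pick a $\mu$-typical point, where the paper instead bounds $1/\inf_x a_1(x)$ below by $\int_X h_\mu(T,x)\,d\mu=h_\mu(T)$ for an arbitrary invariant $\mu$ and then takes the supremum; both give the same conclusion.
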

\begin{proof}For any $x\in X$, by Lemma \ref{tp.re.eq} and Lemma \ref{capacity} and note that $\tilde{s}(x,V,\delta)\le s_{\mu}(x,V,\delta)$, $OC(X,T)\subseteq \mathscr{B}_{\mu}^+$ for every $\mu\in M(X,T)$, one has
\begin{equation}\nonumber
\begin{aligned}
\frac{1}{a_1(x,\delta)}&=\inf_{V\in OC(X,T)}\frac{-\log(c(V))}{\tilde{s}(x,V,\delta)}\\
&\ge\inf_{V\in OC(X,T)}\sup_{\mu\in M(X,T)}\frac{-\log\mu(V)}{s_{\mu}(x,V,\delta)}
\ge\sup_{\mu\in M(X,T)}\inf_{V\in OC(X,T)}\frac{-\log\mu(V)}{s_{\mu}(x,V,\delta)}\\
&\ge\sup_{\mu\in M(X,T)}\inf_{V\in \mathscr{B}_{\mu}^+}\frac{-\log\mu(V)}{s_{\mu}(x,V,\delta)}
=\sup_{\mu\in M(X,T)}\frac{1}{a_\mu(x,\delta)}.
\end{aligned}
\end{equation}
Hence, $a_1(x,\delta)\le\inf\limits_{\mu\in\mathcal{M}(X,T)}a_\mu(x,\delta)$. Let $\delta\to 0$, one has
$a_1(x)\le\inf\limits_{\mu\in M(X,T)}a_\mu(x)$.
Then
$$\frac{1}{\inf\limits_{x\in X}a_1(x)}=\sup_{x\in X}\frac{1}{a_1(x)} \ge \sup_{x\in X}\sup_{\mu \in M(X,T)}\frac{1}{a_{\mu}(x)},$$
which means for any $x_0\in X$, $\mu \in M(X,T)$, we have $\frac{1}{\inf\limits_{x\in X}a_1(x)}\ge \frac{1}{a_{\mu}(x_0)}$.
Combing this with Theorem~A and the Brin-Katok entropy formula, one has
$$\frac{1}{\inf\limits_{x\in X}a_1(x)}\ge \int_{X}\frac{1}{a_{\mu}(x)}d\mu(x)=\int_{X}h_{\mu}(T,x)d\mu(x)=h_{\mu}(T),$$
for every $\mu\in M(X,T)$. Thus by the variational principle, we have $\frac{1}{\inf\limits_{x\in X}a_1(x)}\ge h_{top}(T)$, and this ends our proof.
\end{proof}

\subsection{Type-II Topological Restricted Sensitivity }\label{subsec-type2}
\begin{de}\label{type2}
Let $(X,T)$ be a TDS. For $x\in X$ and $\delta>0$, $T$ is said to be {\it type-II topological restricted sensitive(TRS-II for short) at $x$ w.r.t. $\delta$} if there exists $a>0$ such that for $\epsilon>0$ small enough there exists $N_{\varepsilon}\in\N$, such that for each Bowen ball $B_N(x,\epsilon)$ with $N>N_{\varepsilon}$, there exists nonnegative integer $n\le a\log r(N,\epsilon)$ and $y\in B_N(x,\epsilon)$ with $$\rho(T^nx,T^ny)>\delta.$$
And $T$ is {\it TRS-II} if there exists a dense subset $D\subset X$ such that for any $x \in D$ there exists $\delta>0$ such that $T$ is TRS-II at
$x$ w.r.t $\delta$.
\end{de}

Let $\delta>0$ and $x\in X$. If $T$ is TRS-II at $x$ w.r.t. $\delta$, we further consider $a_{2}(x,\delta)$, the {\it type-II topological restricted asymptotic rate at $x$ with respect to $\delta$}, which is defined to be the infimum over all $a>0$ satisfies the property in Definition \ref{type2}. If $T$ is not TRS-II at $x$ w.r.t. $\delta$, then let $a_{2}(x,\delta)=+\infty$.

Note that, $a_{2}(x,\delta)$ no increase as $\delta$ decrease, then the {\it type-II topological restricted asymptotic rate at $x$} is defined by
 $$a_2(x)=\lim_{\delta\to0}a_2(x,\delta),$$
and let $a_{2}(x)=+\infty$ if for any $\delta>0$, $a_{2}(x,\delta)=+\infty$.

The following result shows the relation between $a_2(x)$ and the topological entropy.
\begin{thmC}
Let $(X,T)$ be a TDS. Then
 $$\sup_{x\in X}\frac{1}{a_2(x)}=h_{top}(T).$$
\end{thmC}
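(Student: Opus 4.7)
The plan is to establish both inequalities in $\sup_{x\in X}\frac{1}{a_2(x)}=h_{top}(T)$ using the auxiliary first-sensitivity time $\tilde s_2(x,N,\epsilon,\delta):=\min\{n\ge 0:\exists y\in B_N(x,\epsilon),\ \rho(T^nx,T^ny)>\delta\}$, with which the TRS-II condition of Definition~\ref{type2} amounts to an asymptotic upper bound on $\frac{\tilde s_2(x,N,\epsilon,\delta)}{\log r(N,\epsilon)}$ as $N\to\infty$, uniformly for $\epsilon$ small.

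For the upper bound $\sup_x\frac{1}{a_2(x)}\le h_{top}(T)$, I would use the trivial observation that if $\epsilon<\delta$ and $y\in B_N(x,\epsilon)$, then $\rho(T^nx,T^ny)<\epsilon<\delta$ for all $0\le n\le N$, so $\tilde s_2(x,N,\epsilon,\delta)\ge N+1$. Consequently $\limsup_N\frac{\tilde s_2}{\log r(N,\epsilon)}\ge\frac{1}{\liminf_N \log r(N,\epsilon)/N}$, and since $\liminf_N\frac{\log r(N,\epsilon)}{N}$ is non-decreasing as $\epsilon\downarrow 0$ with limit $h_{top}(T)$, any $a$ witnessing TRS-II at $x$ w.r.t.\ $\delta$ must satisfy $a\ge \frac{1}{h_{top}(T)}$. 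This forces $a_2(x,\delta)\ge \frac{1}{h_{top}(T)}$ for every $\delta>0$, hence $a_2(x)\ge \frac{1}{h_{top}(T)}$ at every $x\in X$.

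For the reverse inequality I would transport Theorem~A to the topological setting. Assume $h_{top}(T)>0$ (otherwise the claim is trivial from the upper bound), and use the variational principle to fix an ergodic $\mu\in M^e(X,T)$ with $h_\mu(T)>0$. At a $\mu$-generic $x$, Theorem~A yields $a_\mu(x)=\frac{1}{h_\mu(T)}$, and Brin-Katok, combined with the monotonicity of $-\log\mu(B_N(x,\epsilon))$ in $\epsilon$, gives $\limsup_N\frac{-\log\mu(B_N(x,\epsilon))}{N}\le h_\mu(T)$ for every $\epsilon>0$. The key comparison is obtained by applying Lemma~\ref{3.2} to $V=B_N(x,\epsilon)\in\mathscr{B}_\mu^+$, which gives $s_\mu(x,B_N(x,\epsilon),\delta)\le a_\mu(x,\delta)\bigl(-\log\mu(B_N(x,\epsilon))\bigr)$; since a positive-$\mu$-measure family of witnesses in particular contains at least one witness, $\tilde s_2\le s_\mu$. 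Dividing through by $\log r(N,\epsilon)$, taking $\limsup_N$, and using $\liminf_N\frac{\log r(N,\epsilon)}{N}\to h_{top}(T)$ as $\epsilon\to 0$, one then obtains $a_2(x,\delta)\le a_\mu(x,\delta)\cdot\frac{h_\mu(T)}{h_{top}(T)}$. Passing $\delta\to 0$ and using Theorem~A yields $a_2(x)\le \frac{1}{h_{top}(T)}$ on a $\mu$-conull set, whence $\sup_x\frac{1}{a_2(x)}\ge h_{top}(T)$.

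The main obstacle I expect is the careful management of the three nested limits $N\to\infty$, $\epsilon\to 0$, $\delta\to 0$ together with the ``for $\epsilon$ small enough'' quantifier in Definition~\ref{type2}: one must verify that the pointwise bounds on ratios such as $\frac{-\log\mu(B_N(x,\epsilon))}{\log r(N,\epsilon)}$ truly survive the iterated limits and translate into the correct infimum defining $a_2(x,\delta)$, and then that the passage $\delta\to 0$ can be performed on the full $\mu$-conull set provided by Theorem~A and Brin-Katok without losing the factor $h_\mu(T)/h_{top}(T)$ needed to close the argument.
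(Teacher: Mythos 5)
Your proposal is correct and follows essentially the same route as the paper: the upper bound comes from the observation that for $\epsilon<\delta$ the first separation time from a Bowen ball $B_N(x,\epsilon)$ exceeds $N$, forcing $N< a\log r(N,\epsilon)$; the lower bound transports Theorem~A via Lemma~\ref{3.2} and Brin--Katok at a $\mu$-generic point of a positive-entropy ergodic $\mu$ and closes with the variational principle. The only difference is bookkeeping — the paper uses a multiplicative $\beta^2$ factor where you use a small additive $\gamma$ and monotonicity of $\limsup_N\frac{-\log\mu(B_N(x,\epsilon))}{N}$ in $\epsilon$ — and your version is, if anything, slightly sharper in producing $a_2(x)\le\frac{1}{h_{top}(T)}$ directly on a $\mu$-conull set.
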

\begin{proof}Firstly, we show $\sup_{x\in X}\frac{1}{a_2(x)}\le h_{top}(T)$.
For any fixed $x\in X$, since the inequality is obviously true if $a_2(x)=+\infty$, we assume $a_2(x)<+\infty$. Then there exists $\delta(x)>0$ such that $T$ is TRS-II at $x$ w.r.t. $\delta (x)$. For any $a>a_2(x,\delta(x))$ and $0<\epsilon<\delta(x)$ small enough there exists $N_{\epsilon}\in\N$ such that for any $N > N_{\epsilon}$, there is integer $0\le n\le a\log r(N,\epsilon)$ and $y\in B_N(x,\epsilon)$ such that $\rho(T^nx,T^ny)>\delta(x)$. Hence, $B_N(x,\epsilon)\nsubseteq B_{\lfloor a\log r(N,\epsilon)\rfloor}(x,\delta(x))$ for any $N>N_{\epsilon}$. Since $\epsilon<\delta(x)$, one has
$$B_N(x,\delta(x))\nsubseteq B_{\lfloor a\log r(N,\epsilon)\rfloor}(x,\delta(x)).$$
Therefore, $N<\lfloor a\log s_N(T,\epsilon)\rfloor\le a\log r(N,\epsilon)$ for every $N>N_{\epsilon}$. This is, $\frac{1}{a}<\frac{\log r(N,\epsilon)}{N}$ for any $N>N_{\epsilon}$. Taking $a\to a_2(x,\delta(x))$, we have
$$\frac{1}{a_2(x,\delta (x))}\le\lim_{\epsilon\to0}\limsup_{N\to+\infty}\frac{\log r(N,\epsilon)}{N}=h_{top}(T).$$
Finally, let $\delta(x)\to 0$, we have $\frac{1}{a_2(x)}\le h_{top}(T)$ for any $x\in X$. Thus $\sup\limits_{x\in X}\frac{1}{a_2(x)}\le h_{top}(T)$.

Nextly, we shall show $\sup\limits_{x\in X}\frac{1}{a_2(x)}\ge h_{top}(T)$. We can assume $h_{top}(T)>0$, otherwise we have nothing to prove. For any $\mu\in M^e(X,T)$ with $h_{\mu}(T)>0$, $a_\mu(x)=\frac{1}{h_{\mu}(T,x)}=\frac{1}{h_{\mu}(T)}$ holds for $\mu$-a.e $x\in X$. We choose $x_0\in X$ with this property.

For any $0<\beta<1$, by the definition of Bowen's topological entropy and Brin-Katok's local entropy, for $\epsilon$ small enough there exits $N_{\epsilon}$ such that for $N>N_\epsilon$, one has
$$\beta h_{top}(T)\le \frac{\log r(N,\epsilon)}{N} \text{ and } -\frac{\log \mu(B_N(x_0,\epsilon))}{N}\le\frac{h_{\mu}(T)}{\beta}\le\frac{h_{top}(T)}{\beta}.$$
Hence, $-\log\mu(B_N(x_0,\epsilon))\le\frac{\log r(N,\epsilon)}{\beta^2}$ for any $N>N_{\epsilon}$.

On the other hand, for any $a>a_\mu(x_0)$ there exists $\delta_0>0$ such that $a>a_\mu(x_0,\delta)$ for any $0<\delta<\delta_0$. Then for any $0<\delta<\delta_0$and $N>N_{\epsilon}$ there exists
$$n\le -a\log \mu(B_N(x_0,\epsilon))\le \frac{a}{\beta^2}\log r(N,\epsilon)$$
satisfying $\mu(\{y\in B_N(x_0,\epsilon): \rho(T^nx_0,T^ny)>\delta\})>0$. This implies $a_2(x_0)\le \frac{a}{\beta^2}$. By taking $\beta\to 1$ and $a\to a_\mu(x_0)$, we have $a_2(x_0)\le a_{\mu}(x_0)=\frac{1}{h_{\mu}(T)}$. Therefore, by Theorem~A, one has
$$h_{top}(T)=\sup_{\mu\in M^e(X,T)}h_\mu(T)=\le \sup_{x\in X}\frac{1}{a_2(x)}.$$
This completes our proof.
\end{proof}

For a TDS $(X,T)$, the entropy map of $T$ is a map from $M(X,T)$ to $\R$ which take $\mu\in M(X,T)$ to $h_{\mu}(T)$. We have the following result.
\begin{cor}\label{cor2}Let $(X,T)$ be a TDS. If the entropy map of $T$ is upper semi-continuous, then there exists some $x\in X $ such that $\frac{1}{a_2(x)}=h_{top}(T)$.
\end{cor}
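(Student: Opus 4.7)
The plan is to combine Theorem~C with the existence of an ergodic measure of maximal entropy, which is precisely where upper semi-continuity of the entropy map will be used. If $h_{top}(T)=0$, Theorem~C gives $\sup_{x\in X}\tfrac{1}{a_2(x)}=0$; since $\tfrac{1}{a_2(x)}\ge 0$ for every $x$, any $x\in X$ already attains the supremum and there is nothing to prove. So I will assume $h_{top}(T)>0$ throughout.

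The first real step is to produce an ergodic measure $\mu$ with $h_\mu(T)=h_{top}(T)$. Since $M(X,T)$ is compact and, by hypothesis, the entropy map is upper semi-continuous on it, it attains its supremum; the variational principle then gives some $\mu^*\in M(X,T)$ with $h_{\mu^*}(T)=h_{top}(T)$. Decomposing $\mu^*$ into ergodic components via Theorem~\ref{ergodic decomposition} yields $h_{\mu^*}(T)=\int_{M^e(X,T)} h_\nu(T)\,d\lambda(\nu)$, and since each $h_\nu(T)\le h_{top}(T)$, the equality forces $h_\nu(T)=h_{top}(T)$ for $\lambda$-a.e.\ $\nu$. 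I pick any such ergodic $\mu$.

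Now Theorem~A applied to this ergodic $\mu$ gives $a_\mu(x)=\tfrac{1}{h_\mu(T,x)}=\tfrac{1}{h_{top}(T)}$ for $\mu$-a.e.\ $x\in X$. The second half of the proof of Theorem~C, which established $\sup_{x\in X}\tfrac{1}{a_2(x)}\ge h_{top}(T)$, in fact proves the following stronger local statement: for any ergodic $\mu$ of positive entropy and any $\mu$-typical point $x_0$ (one at which $a_\mu(x_0)=1/h_\mu(T)$ and the Brin--Katok formula holds), one has $a_2(x_0)\le a_\mu(x_0)$. Applying this to our $\mu$ of maximal entropy at such an $x_0$ yields $a_2(x_0)\le\tfrac{1}{h_{top}(T)}$, i.e.\ $\tfrac{1}{a_2(x_0)}\ge h_{top}(T)$. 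The reverse inequality $\tfrac{1}{a_2(x_0)}\le\sup_{x\in X}\tfrac{1}{a_2(x)}=h_{top}(T)$ is immediate from Theorem~C, and equality follows.

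The only nontrivial ingredient in this plan is the extraction of an ergodic measure of maximal entropy; this is exactly where upper semi-continuity enters, since without it the supremum in the variational principle need not be attained by any measure. Once such a $\mu$ is in hand, the rest of the argument is a direct application of Theorem~A together with the quantitative estimate already buried in the proof of Theorem~C, so I expect no further technical obstacle.
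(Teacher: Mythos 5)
Your proposal is correct and follows essentially the same route as the paper: use upper semi-continuity to produce an ergodic measure of maximal entropy, feed it into Theorem~A and the inequality $a_2(x_0)\le a_\mu(x_0)$ extracted from the proof of Theorem~C, and close the sandwich with the upper bound $\sup_x \frac{1}{a_2(x)}=h_{top}(T)$ from Theorem~C itself. Your treatment is slightly more careful than the paper's in two places — you isolate the trivial $h_{top}(T)=0$ case (the paper's chain of inequalities silently needs $h_\mu(T)>0$) and you spell out the ergodic-decomposition argument that yields an \emph{ergodic} measure of maximal entropy rather than simply asserting one exists — but these are refinements, not a different proof.
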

\begin{proof} Since the entropy map of $T$ is an upper semi-continuous function on the compact space $M(X,T)$, there exists $\mu \in M^e(X,T)$ such that $\int _Xh_{\mu}(T,x)d\mu=h_{\mu}(T)=h_{top}(T)$. Then $\mu( \{\ x\in X,\ h_{\mu}(T,x)\ge h_{top}(T) \}\ )>0 $.
Then,
$$h_{top}(T)\ge \frac{1}{a_2(x)}\ge \frac{1}{a_{\mu}(x)}=h_{\mu}(T,x)\ge h_{top}(T)$$
for some $x\in X $ by Theorem~A and Theorem C. This finishes our proof.
\end{proof}

\bibliographystyle{amsplain}

\end{document}